\theoremstyle{definition}%{remark}{\plain}
\newtheorem{theorem}{Theorem}[section]
\newtheorem{remark}{Remark}[section]
\newtheorem{lemma}{Lemma}[section]
\numberwithin{equation}{section}%
\numberwithin{table}{section}%
\numberwithin{figure}{section}
\def\3bar{{|\hspace{-.02in}|\hspace{-.02in}|}}
\renewcommand\div{\operatorname{div}}
\newcommand\curl{\operatorname{curl}}
\newcommand\ran{\operatorname{ran}}
\newcommand\Span{\operatorname{span}}
\def\d{\text{d}}
\begin{document}
\title[]{Simple curl-curl-conforming finite elements in two dimensions}

\keywords
{$H(\curl^2)$-conforming,  finite elements , de Rham complexes, exterior calculus, quad-curl problems.}

\author{Kaibo Hu}
\email{khu@umn.edu}
\address{School of Mathematics,
University of Minnesota, 
Minneapolis, MN 55455,
USA.}
\author{Qian Zhang (\Letter)}
\email{go9563@wayne.edu}
\address{Department of Mathematics, Wayne State University, Detroit, MI 48202, USA. }

\author{Zhimin Zhang}
\email{zmzhang@csrc.ac.cn; zzhang@math.wayne.edu}
\address{Beijing Computational Science Research Center, Beijing, China; Department of Mathematics, Wayne State University, Detroit, MI 48202, USA}
\thanks{This work is supported in part by the National Natural Science Foundation of China grants NSFC 11871092 and NSAF U1930402.}

\subjclass[2000]{65N30 \and 35Q60 \and 65N15 \and 35B45}

\date{\today}
\begin{abstract} 
We construct smooth finite element de Rham complexes in two space dimensions. This 
leads to three families of curl-curl conforming finite elements, two of which contain two existing families. The simplest triangular and rectangular finite elements  have  only 6 and 8 degrees of freedom, respectively.  
Numerical experiments for each family demonstrate the convergence and efficiency of the elements for solving the quad-curl problem.

\end{abstract}	
\maketitle
\section{Introduction}
In this paper, we construct and analyze three families of curl-curl conforming ($H(\curl^2)$-conforming) finite elements in two space dimensions (2D) and use these elements to solve the quad-curl problem. 

The quad-curl equation appears in various models, such as the inverse electromagnetic scattering theory \cite{Cakoni2017A,Monk2012Finite, Sun2016A} and magnetohydrodynamics \cite{Zheng2011A}. The corresponding quad-curl eigenvalue problem plays a fundamental role in the analysis and computation of the electromagnetic interior transmission eigenvalues \cite{sun2011iterative}. Some methods have been developed for the source problem and the eigenvalue problem in, e.g., \cite{WZZelement, Zheng2011A,Sun2016A,Qingguo2012A,Brenner2017Hodge,quadcurlWG, Zhang2018M2NA,Chen2018Analysis164, Zhang2018Regular162,SunZ2018Multigrid102,WangC2019Anew101,BrennerSC2019Multigrid100,quad-curl-eig-posterior}. Two of the authors and their collaborator  have recently developed, for the first time, a family of curl-curl conforming finite elements \cite{WZZelement}. 
To reduce the number of degrees of freedom (DOFs), they used incomplete polynomials. %As a result, the corresponding scheme has one order less accuracy measured in the $L^2$-norm.\footnote{\textcolor{blue}{why mentioning this? seems too technical to me.}} 
 The  polynomial degree $k$ starts from 4 for triangular elements and 3 for rectangular elements, respectively, and the lowest-order elements of both shapes have 24 DOFs. Moreover, in \cite{quad-curl-eig-posterior}, they collaborated with J. Sun and constructed another family of curl-curl conforming triangular elements with  complete  polynomials.  The polynomial degree $k$ starts from 4 and hence the lowest-order element has 30 DOFs. In this paper, in addition to the construction of new $H(\curl^2)$-conforming elements, we will also fit the two existing families into complexes and extend them to lower-order cases.
% we will refer to the construction in \cite{WZZelement} and \cite{quad-curl-eig-posterior} as the first and second families of curl-curl conforming elements, respectively.

The discrete de Rham complex is now an important tool for the construction of finite elements and analysis of numerical schemes, c.f., \cite{arnold2018finite, arnold2010finite, arnold2006finite, hiptmair1999canonical, neilan2015discrete,christiansen2018nodal}. In this direction, the finite element periodic table \cite{arnold2014periodic} includes various successful finite elements for computational electromagnetism or diffusion problems. Motivated by problems in fluid and solid mechanics, there is an increased interest in constructing finite element de Rham complexes with enhanced smoothness, sometimes referred to as Stokes complexes \cite{falk2013stokes,christiansen2016generalized}. In this paper, for the discretization of the quad-curl problem, we will consider another variant of the de Rham complex, i.e.,
\begin{equation}\label{2D:quad-curl}
\begin{tikzcd}
0 \arrow{r} &\mathbb{R} \arrow{r}{\subset} & H^{1}(\Omega)   \arrow{r}{\nabla} & H(\curl^2; \Omega)  \arrow{r}{\nabla\times} & H^{1}(\Omega) \arrow{r}&0,
 \end{tikzcd}
\end{equation}
where  $\Omega$ is a bounded Lipschitz domain in $\mathbb{R}^{2}$ and 
$$
H(\curl^{2}; \Omega):=\{\bm u \in {\bm L}^2(\Omega):\; \nabla \times \bm u \in L^2(\Omega),\;\bm{\nabla \times}\nabla \times \bm u \in \bm L^2(\Omega)\}.
$$
 For simplicity of presentation, throughout this paper we will assume that $\Omega$ is contractible. Then the exactness of \eqref{2D:quad-curl} follows from standard results in, e.g., \cite{arnold2018finite}.

\begin{comment}
We recall the following de Rham complex with minimal smoothness in 2D
\begin{align}\label{2D:curl-curl}
\begin{diagram}
0 & \rTo^{} & \mathbb{R} & \rTo^{\subset} & H^{1}(\Omega) & \rTo^{\nabla} & H(\curl; \Omega) & \rTo^{\nabla\times} & L^{2}(\Omega)  & \rTo^{} & 0,
\end{diagram}
\end{align}
The statement that \eqref{2D:curl-curl} is a complex means  that the composition of two successive maps is zero. If  $\Omega$ is a contractible Lipschitz domain, then the complex \eqref{2D:quad-curl} is exact, i.e., the range of each map is the kernel of the succeeding map. 
The already existing $H^1$, $H(\curl)$, and $L^2$ conforming finite element spaces form a discrete exact sub-complex of \eqref{2D:curl-curl} \cite{demkowicz2000rham,Monk.P.2003a}. In addition, there exist projection operators from the spaces $H^1$, $H(\curl)$, and $L^2$ to the corrsponding finite element spaces which commute. The most excellent part of this tool is that we can construct specific shape function spaces (not necessarily piecewise $k$-th order polynomials). 
\end{comment}

This complex point of view makes it possible to achieve the goal of this paper, i.e., constructing simple curl-curl conforming elements with fewer degrees of freedom, compared to, e.g., those in \cite{WZZelement} and \cite{quad-curl-eig-posterior}. From this complex perspective, we also fit the quad-curl problem and its finite element approximations in the framework of the finite element exterior calculus (FEEC) \cite{arnold2018finite,arnold2006finite}. Thus a number of tools from FEEC can be used for the numerical analysis. For example, we construct interpolation operators that commute with the differential operators. Then the convergence result follows from a standard argument. 

 Specifically, the new finite elements fit into a subcomplex of \eqref{2D:quad-curl}:
\begin{equation}\label{discrete-complex}
\begin{tikzcd}
0 \arrow{r} &\mathbb{R} \arrow{r}{\subset} & \Sigma_h  \arrow{r}{\nabla} & V_{h}  \arrow{r}{\nabla\times} & W_h \arrow{r}&0.
 \end{tikzcd}
\end{equation}
In \eqref{discrete-complex}, we choose Lagrange finite element spaces for $\Sigma_h$ and Lagrange elements enriched with bubbles for $W_h$. The space $V_h\subset H(\curl^2;\Omega)$ is thus obtained as the gradient of $\Sigma_h$ plus a complementary part, mapped onto $W_h$ by $\curl$. We will use $V_h$ as a conforming finite element for solving the quad-curl problem below. Among the three versions of $V_h$ which we will construct in this paper, the simplest elements have only 6 DOFs for a triangle and 8 DOFs for a rectangle.  To the best of our knowledge, these elements have the smallest number of DOFs among all the existing curl-curl  conforming finite elements. 
% By picking an appropriate order of the space $\Sigma_h$, we can construct a new family of $H(\curl^2)$-conforming elements which have fewer DOFs. 

%With a similar idea of using complexes and Poincar\'{e} operators, we also extend the two existing families \cite{WZZelement, quad-curl-eig-posterior} to lower-order cases, hence reducing the number of DOFs. %\footnote{\textcolor{blue}{Listing all the numbers of DOFs seems too much. Maybe give an example? e.g., first (second) family on triangles?}}

%\textcolor{blue}{fill the gap of rectangular elements in the second family}\footnote{\textcolor{blue}{I did not understand this.}}. 

%The numbers of DOFs of the lowest-order triangular and rectangular elements in the first family are 9 and 13 and those for the second family are 13 and 20. Compared with the original number of DOFs 24 and 30, 
%it is a huge step forward. 

The significance of this new development is threefold: 1) It develops new families of curl-curl conforming elements; 2) It relates the curl-curl conforming elements to the FEEC via the de Rham complex, thereby allowing further systematic development of new elements; 3) It reduces element DOF of the existing lowest-order curl-curl conforming element from 24 to 6 and 8 for triangular and rectangular elements, respectively, which makes commercial adoption of the elements feasible.

The remaining part of the paper is organized as follows. In Section 2, we present notations and preliminaries.
In Section 3, we define shape functions and local exact sequences by the Poincar\'e operators and prove their properties. 
In Section 4, we construct a new family of curl-curl conforming finite elements and in Section 5 we extend two existing families to lower-order cases by fitting them into complexes.  In Section 6, we provide numerical examples to verify the correctness and efficiency of our method. Finally, concluding remarks and future work are given in Section 7.

\section{Preliminaries}

Let $\Omega\in\mathbb{R}^2$ be a  contractible Lipschitz domain. We adopt standard notations for Sobolev spaces such as $H^m(D)$ or $H_0^m(D)$ on a 
simply-con-nected sub-domain $D\subset\Omega$ equipped with the norm $\left\|\cdot\right\|_{m,D}$ and the semi-norm $\left|\cdot\right|_{m,D}$. If $m=0$,  the space $H^0 (D)$ coincides with $ L^2(D)$ equipped with the norm $\|\cdot\|_{D}$, and when $D=\Omega$, we drop the subscript $D$. We use  $\bm H^m(D)$   and ${\bm L}^2(D)$ to denote the vector-valued Sobolev spaces $\left[H^m(D)\right]^2$ and $\left[L^2(D)\right]^2$.

Let ${\bm u}=(u_1, u_2)^T$ and ${\bm w}=(w_1, w_2)^T$, where the superscript $T$ denotes the transpose.
Then ${\bm u} \times {\bm w} = u_1 w_2 - u_2 w_1$ and $\nabla \times {\bm u} = \partial_{x_1} u_2  - \partial_{x_2} u_1 $.
For a scalar function $v$, $\bm{\nabla \times} v = (\partial_{x_2} v , - \partial_{x_1} v)^T$.
We denote $(\nabla\times)^2\bm u=\bm\nabla\times\nabla\times\bm u$.

We define
\begin{align*}
&H(\text{curl};D):=\{\bm u \in {\bm L}^2(D):\; \nabla \times \bm u \in L^2(D)\},\\
H(\text{curl}^2;D)&:=\{\bm u \in {\bm L}^2(D):\; \nabla \times \bm u \in L^2(D),\;\bm{\nabla \times}\nabla \times \bm u \in \bm L^2(D)\},
\end{align*}
with the scalar products and norms 
\[(\bm u,\bm v)_{H(\curl^s;D)}=(\bm u,\bm v)+\sum_{j=1}^s((\nabla\times)^j\bm u,(\nabla\times)^j \bm v),\]
and
\[\left\|\bm u\right\|_{H(\curl^s;D)}=\sqrt{(\bm u,\bm u)_{H(\curl^s;D)}},\]
with $s=1,2$.
%with the associated inner product
%$(\bm u,\bm v)_{H(\div;D)}=(\bm u,\bm v)+(\nabla\cdot \bm u,\nabla\cdot \bm v)$ and the norm
%$\left\|\bm u\right\|_{H(\div;D)}=\sqrt{(\bm u,\bm u)_{H(\div;D)}}$.
%Taking the divergence-free condition into account, we define
%\begin{align}\label{spac-01}
%&Y(D)=\left\{\bm u\in H_0(\curl;D)\big|(\bm u,\nabla p)=0,\;\;\forall p\in H_0^1(D)\right\},\\
%&X(D)=\left\{\bm u\in H_0(\curl^2;D)\big|(\bm u,\nabla p)=0,\;\;\forall p\in H_0^1(D)\right\},\label{spac-02}\\
%&H(\text{div}^0;D) :=\{\bm u\in H(\text{div};D):\; \nabla\cdot \bm u=0\;\text{in}\; D\}.
%\end{align}
%When $D=\Omega$, we shall drop $D$ in $X(D)$ and $Y(D)$ for simplicity.

We use $Q_{i,j}(D)$ to denote the polynomials with two variables $(x_1, x_2)$ where the maximal degree is $i$ in $x_1$ and $j$ in $x_2$. For simplicity, we drop a  subscript $i$ when $i=j$.  We use $P_i(D)$ to represent the space of polynomials on $D$ with degree of no larger than $i$ and $\bm P_i(D)=\left[P_i(D)\right]^2$. We denote $\widetilde P_i(D)$ as the space of homogeneous polynomials.

Let \,$\mathcal{T}_h\,$ be a partition of the domain $\Omega$
consisting of rectangles or triangles. We denote $h_K$ as the diameter of an element $K \in
\mathcal{T}_h$ and $h$ the mesh size of $\mathcal {T}_h$.
We use $C$ to denote a generic positive $h$-independent constant. 

Let $\mathfrak{p}: C^{\infty}(\mathbb{R}^{2})\mapsto \left [C^{\infty}(\mathbb{R}^{2})\right ]^{2}$ be an operator which maps a scalar function to a vector field:
$$
\mathfrak{p} u:=\int_{0}^{1}t \bm{x}^{\perp}u(t\bm x)\, dt,
$$
where
$$
\bm{x}:=(x_{1}, x_{2})^T, \text{ and } \bm{x}^{\perp}:=(-x_{2}, x_{1})^T.
$$
%We note $\bm{x}^{\perp}\cdot \bm{x}=0$. 
As a special case of the Poincar\'{e} operators (c.f. \cite{hiptmair1999canonical, christiansen2016generalized}), $\mathfrak{p}$ has the following properties:
\begin{itemize}
\item polynomial preserving property: if $u\in {P}_{r}(\mathbb{R}^{2})$, then $\mathfrak{p}u\in \bm {P}_{r+1}(\mathbb{R}^{2})$;
\item the null-homotopy identity
\end{itemize}
\begin{equation}\label{null-homotopy}
\nabla\times \mathfrak{p}u=u,~ \forall u\in C^{\infty}(\mathbb R^2).
\end{equation}

To obtain conforming finite elements, we need a modified version of the Poincar\'e operators $\widetilde{\mathfrak p}$ such that for $u\in P_k(K)$ or $Q_k(K)$ with $k\geq 1$, the tangential component of $\widetilde{\mathfrak p} u$ is constant on each edge of $K$.
To this end, we first prove the following lemma.

\begin{lemma}
	For $u\in P_k(K)$ or $Q_k(K)$ with $k\geq 1$, there exists a function $\varphi_u\in P_{k+1}(K)$ or $Q_{k+1}(K)$ such that $(\mathfrak p u-\nabla \varphi_u)\cdot \bm \tau_{e}\in P_0(e)$ on each edge $e$ of $K.$  
\end{lemma}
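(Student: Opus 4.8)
The plan is to correct $\mathfrak p u$ by the gradient of a function that touches only its boundary trace. The key preliminary observation is that the tangential component of $\mathfrak p u$ on an edge is a \emph{univariate} polynomial of degree at most $k$. Writing the definition as $(\mathfrak p u)(\bm x)=\bm x^{\perp}\int_{0}^{1}t\,u(t\bm x)\,dt$ and parametrizing an edge $e$ of $K$ as $\bm x(s)=\bm a+s\bm\tau_e$ with $s$ the arc length, one has $\bm x(s)^{\perp}\cdot\bm\tau_e=\bm a^{\perp}\cdot\bm\tau_e=:d_e$ independent of $s$, so $(\mathfrak p u)(\bm x(s))\cdot\bm\tau_e=d_e\int_{0}^{1}t\,u(t\bm x(s))\,dt$. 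For $u\in P_k(K)$, for each fixed $t$ the map $s\mapsto u(t\bm x(s))$ is the restriction of a degree-$\le k$ polynomial to a line, hence of degree $\le k$ in $s$, and integrating out $t$ preserves this. For $u\in Q_k(K)$ the edges of the rectangle are axis-parallel, so taking e.g. $\bm\tau_e=(1,0)^T$ and $e\subset\{x_2=c\}$ a direct computation gives $(\mathfrak p u)(\bm x(s))\cdot\bm\tau_e=-c\int_{0}^{1}t\,u\big(t(a_1+s),tc\big)\,dt$, again of degree $\le k$ in $s$. Denote this polynomial $\psi_e(s)$.

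Next I would build the boundary trace of $\varphi_u$ edge by edge. Enumerate the edges $e_1,\dots,e_m$ ($m=3$ or $4$) cyclically, with $e_i$ running from vertex $V_i$ to $V_{i+1}$ in arc length $s\in[0,L_i]$, and put $G_i(s):=\int_{0}^{s}\psi_{e_i}$, a polynomial of degree $\le k+1$ with $G_i(0)=0$. I look for $\varphi_u$ with edge traces $\varphi_u|_{e_i}(s)=G_i(s)-c_is+a_i$ for scalars $c_i,a_i$ still to be chosen: each such trace has degree $\le k+1$ (using $k\ge 1$) and its tangential derivative along $e_i$ is $\psi_{e_i}-c_i$. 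The traces agree at the shared vertices iff $a_{i+1}=a_i+G_i(L_i)-c_iL_i$ for all $i$; summing around $\partial K$ shows the loop closes exactly when $\sum_{i=1}^{m}\big(G_i(L_i)-c_iL_i\big)=0$. Since $L_i>0$ this is a single solvable scalar constraint on $(c_1,\dots,c_m)$ — for instance load the whole defect onto $c_1$ and set $c_2=\dots=c_m=0$ — after which, fixing $a_1=0$, the recursion determines $a_2,\dots,a_m$ unambiguously.

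Finally I would extend this compatible system of edge polynomials to an element of $P_{k+1}(K)$ (resp. $Q_{k+1}(K)$) via the standard trace property of the Lagrange finite element: any assignment of degree-$\le k+1$ polynomials to the edges that is continuous at the vertices is the trace of some $\varphi_u$ in the element space (in the rectangular case one uses that the edges are axis-parallel, so the per-edge trace space is precisely the degree-$\le k+1$ polynomials in the active coordinate). For this $\varphi_u$ one then gets, on each edge, $(\mathfrak p u-\nabla\varphi_u)\cdot\bm\tau_{e_i}=\psi_{e_i}-(\varphi_u|_{e_i})'=c_i\in P_0(e_i)$, which is the claim.

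The one genuinely delicate point is the degree count in the first step: were it not for the identity $\bm x^{\perp}\cdot\bm\tau_e\equiv\mathrm{const}$ on $e$ (together with the axis-alignment of rectangle edges in the $Q_k$ case), the tangential trace of $\mathfrak p u$ could a priori have degree $k+1$, which would force $\varphi_u$ into $P_{k+2}(K)$ and break the statement. Everything after that is elementary — antidifferentiation along each edge, solving one scalar linear equation for the edge-wise constants, and invoking the Lagrange trace/extension lemma.
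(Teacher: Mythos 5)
Your proof is correct and follows essentially the same route as the paper: antidifferentiate the tangential trace of $\mathfrak p u$ along each edge and extend the resulting vertex-compatible edge data to a degree-$(k+1)$ Lagrange function, so that subtracting $\nabla\varphi_u$ leaves only a constant tangential component on each edge. The only differences are cosmetic --- the paper first subtracts the edge mean so that each edge trace $\psi_e$ vanishes at both endpoints and vertex compatibility is automatic (the leftover constant on $e$ being $\frac{1}{|e|}\int_e\mathfrak p u\cdot\bm\tau_e\,\mathrm{d}s$), whereas you enforce closure of the boundary loop by one solvable scalar constraint on the $c_i$ --- and your explicit observation that $\bm x^{\perp}\cdot\bm\tau_e$ is constant along $e$ (plus axis-alignment in the $Q_k$ case), so the tangential trace has degree at most $k$ rather than $k+1$, supplies the degree count that the paper asserts without justification.
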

\begin{proof}
On each edge $e$, specify  $\psi_e\in P_{k}(e)$ such that $\nabla_{e}\psi_e =\mathfrak p u\cdot \bm \tau_{e}-\frac{1}{|e|}\int_{e}\mathfrak p u\cdot \bm \tau_{e}\d s\in P_{k}(e)$ and $\psi_e=0$ at two ends of $e$. Here $\nabla_{e}$ is the tangential derivative along $e$. By the Lagrange interpolation, we can construct the function $\varphi_u$ such that $\varphi_u=0$ at each vertex and $\varphi_u|_{e}=\psi_e \text{ on }e$. Then  $(\mathfrak p u-\nabla \varphi_u)\cdot \bm \tau_{e}=\frac{1}{|e|}\int_{e}\mathfrak p u\cdot \bm \tau_{e}\d s \in P_0(e)$. 
\end{proof}
The modified Poincar\'{e} operator is then defined as 
\[\widetilde{\mathfrak p}u={\mathfrak p}u-\nabla \varphi_u.\]

We review some basic facts from homological algebra; further details can be found, for instance, in \cite{arnold2006finite}. A differential complex is a sequence of spaces $V^{i}$ and operators $d^{i}$ such that
\begin{equation}\label{general-complex}
\begin{tikzcd}
0 \arrow{r} &V^{1} \arrow{r}{d^{1}} & V^{2}  \arrow{r}{d^{2}} & \cdots \arrow{r}{d^{n-1}} & V^{n} \arrow{r}{d^{n}}&0,
 \end{tikzcd}
\end{equation}
satisfying the complex property $d^{i+1}d^{i}=0$ for $i=1, 2, \cdots, n-1$. Let $\ker(d^{i})$ be the kernel space of the operator $d^{i}$ in $V^{i}$, and $\ran(d^{i})$ be the image of the operator $d^{i}$ in $V^{i+1}$. Due to the complex property, we have $\ker(d^{i})\subset \ran(d^{i-1})$ for each $i\geq 2$. Furthermore, if $\ker(d^{i})= \ran(d^{i-1})$, we say that the complex \eqref{general-complex} is exact at $V^{i}$. At the two ends of the sequence, the complex is exact at $V^{1}$ if $d^{1}$ is injective (with trivial kernel), and is exact at $V^{n}$ if $d^{n-1}$ is surjective (with trivial cokernel).  The complex \eqref{general-complex} is called exact if it is exact at all the spaces $V^{i}$. If each space in \eqref{general-complex} has finite dimensions, then a necessary (but not sufficient) condition for the exactness of \eqref{general-complex} is the following dimension condition:
$$
\sum_{i=1}^{n} (-1)^{i}\dim (V^{i})=0. 
$$

%We choose the Lagrange finite space $\Sigma_h$ of order $r$ for the first $H^1(\Omega)$ space in the complex \eqref{2D:quad-curl}.

\section{Local spaces and polynomial complexes}

To define a finite element space, we must supply, for each element $K\in\mathcal{T}_h$, the space of shape functions and the DOFs. We will use the following complexes as the local function spaces on each $K\in \mathcal{T}_h$ for \eqref{discrete-complex}:
\begin{equation}\label{local-complex}
\begin{tikzcd}
0 \arrow{r} &\mathbb{R} \arrow{r}{\subset} & \Sigma^{r}_h(K)  \arrow{r}{\nabla} & V^{r-1, k}_{h}(K)   \arrow{r}{\nabla\times} & W_h^{k-1}(K) \arrow{r}&0.
 \end{tikzcd}
\end{equation}
%Denote $\Sigma(u)=M_{v}(u)\cup M_{e}(u)\cup M_{K}(u)$.
%We now specify the shape function spaces $\Sigma_h(K)$, $V_h(K)$, and $W_h(K)$.
Let $\Sigma^{r}_h(K)$ be $P_{r}(K)$ for a triangle element or $Q_{r}(K)$ for a rectangle element. 
%The order $r$ can be taken as $k-1$, $k$, or $k+1$. Different choices of $r$ can lead to different families of  elements. 
For a triangle element $K$, we set
$$
W_h^{k-1}(K)=
\begin{cases}
	P_{k-1}(K),& k\geq 4,\\
	P_{k-1}(K)\oplus \Span\{B_t\},& k=2,3,
	\end{cases}
$$
where $B_t=\lambda_1\lambda_2\lambda_3$ with the barycentric coordinate $\lambda_i$. For a rectangle element $K$, we set
$$
W_h^{k-1}(K)=
\begin{cases}
	Q_{k-1}(K),& k\geq 3,\\
	Q_{k-1}(K)\oplus \Span\{B_r\},& k=2,
\end{cases}
$$
where $B_r=h_x^{-2}h_y^{-2}\left(x-x_l\right)\left(x-x_r\right)\left(y-y_d\right)\left(y-y_u\right)$ with the element $K=(x_l,x_r)$ $\times(y_d,y_u)$ and $h_x=x_r-x_l$, $h_y=y_u-y_d$. 
%
%\begin{lemma}
%	For $w\in P_k(K)$ or $Q_k(K)$ with $k\geq 1$, there exists a function $\varphi_w\in P_{k+1}(K)$ or $Q_{k+1}(K)$ such that $(\mathfrak p w-\nabla \varphi_w)\cdot \bm \tau_{e}\in P_0(e)$ on each edge $e$ of $K.$  
%\end{lemma}
%\begin{proof}
%On each edge $e$, specify  $\psi_e\in P_{k}(e)$ such that $\nabla_{e}\psi_e =\mathfrak p w\cdot \bm \tau_{e}-\frac{1}{|e|}\int_{e}\mathfrak p w\cdot \bm \tau_{e}\d s\in P_{k}(e)$ and $\psi_e=0$ at two ends of $e$. Here $\nabla_{e}$ is the tangential derivative along $e$. By the Lagrange interpolation, we can construct the function $\varphi_w$ such that $\varphi_w=0$ at each vertex and $\varphi_w|_{e}=\psi_e \text{ on }e$. Then  $(\mathfrak p w-\nabla \varphi_w)\cdot \bm \tau_{e}=\frac{1}{|e|}\int_{e}\mathfrak p w\cdot \bm \tau_{e}\d s \in P_0(e)$. 
%\end{proof}
%
%The new operator is then defined as 
%\[\widetilde{\mathfrak p}w={\mathfrak p}w-\nabla \varphi_w.\] 
%
%By similar argument, Lemmas 3.3, 3.4, 4.2 still hold for the new shape function space $\widetilde V_h^{r-1,k}(K)=\nabla\Sigma_h^r(K)\oplus \widetilde{\mathfrak p} W_h^{k-1}(K)$. For any function $\bm u_h\in \widetilde V_h^{r-1,k}(K)$, $\bm u_h\cdot \bm \tau_e\in P_{r-1}(e)$ on each edge $e$ of $K$ and $\nabla\times\bm u_h\in W_h^{k-1}(K)$. Therefore, the conformity (Lemma 4.3) holds. 
%
We define
\begin{equation}\label{Vh2}
	\begin{aligned}
	V_h^{r-1, k}(K)&=\nabla \Sigma^{r}_h(K)\oplus \mathfrak{p}W^{k-1}_h(K),\ \text{when } r=k, k\geq 4 \text{ and }r=k+1,\\ 
	V_h^{r-1, k}(K)&=\nabla \Sigma^{r}_h(K)\oplus \widetilde{\mathfrak{p}}W^{k-1}_h(K),\ \text{when } r=k, k=2,3 \text{ and }r=k-1.
\end{aligned}
\end{equation}

\begin{remark}
  For a rectangular element, we can also use the serendipity elements $\mathcal{S}_r(K)=P_r(K)\oplus \Span\{x_1^rx_2,x_1x_2^r\}$ \cite{arnold2011serendipity} for $\Sigma_h^r(K)$ and use $\mathcal{S}_{k-1}(K)$ when $k\geq 5$ ($\mathcal{S}_{k-1}(K)\oplus \Span\{B_r\}$ when $k<5$) for $W_h^{k-1}$.
  This leads to another three families of rectangular elements with fewer DOFs and the same accuracy.
\end{remark}

\begin{remark}
	The Koszul operator $\kappa u:=u\bm x^{\perp}$ has similar properties as the Poincar\'e  operator \cite{arnold2018finite, arnold2006finite}. Therefore, for polynomial bases in $W_h^{k-1}(K)$ other than the bubbles $B_t$ or $B_r$, we can replace the Poincar\'e operator $\mathfrak p$ by the Koszul operator. For the bubble function $B_t$ on the reference triangle with vertices $(0,0)$, $(0,1)$ and $(1,0)$ or  $B_r$ on the reference rectangle $(-1,1)\times (-1,1),$ we have
\begin{align*}
		\mathfrak p B_t=&\frac{x_1x_2(4x_1 + 4x_2 - 5)}{20}\bm x^{\perp}, \ \
		\mathfrak p B_r=\frac{2x_1^2x_2^2 - 3x_1^2 - 3x_2^2 + 6}{12}\bm x^{\perp}.\\
		\widetilde{\mathfrak p} B_t=& \mathfrak p B_t-\nabla (x_1x_2(2x_1-1)),\ \
		\widetilde{\mathfrak p} B_r=\mathfrak p B_r-\nabla (x_2x_1^3-x_1x_2^3)/36.
	\end{align*}
	\end{remark}
By the null-homotopy identity \eqref{null-homotopy}, the right hand side of \eqref{Vh2} is a direct sum. 
\begin{lemma}
The local sequence \eqref{local-complex} is a complex and exact. 
\end{lemma}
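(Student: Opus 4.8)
The plan is to verify the two assertions of the lemma separately: first that \eqref{local-complex} is a complex (i.e., successive maps compose to zero), and then that it is exact at each space $\mathbb{R}$, $\Sigma_h^r(K)$, $V_h^{r-1,k}(K)$, and $W_h^{k-1}(K)$.

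\textbf{The complex property.} The inclusion $\mathbb{R}\hookrightarrow \Sigma_h^r(K)$ followed by $\nabla$ is zero since constants have vanishing gradient. For the composition $\nabla\times\circ\,\nabla$ on $\Sigma_h^r(K)$, this is the standard identity $\nabla\times\nabla\varphi=0$. The only point needing a word is that the images actually land where claimed: $\nabla\Sigma_h^r(K)\subset V_h^{r-1,k}(K)$ is immediate from the definition \eqref{Vh2}, and $\nabla\times V_h^{r-1,k}(K)\subset W_h^{k-1}(K)$ follows from the null-homotopy identity \eqref{null-homotopy}, which gives $\nabla\times(\mathfrak p w)=w$ (and $\nabla\times(\widetilde{\mathfrak p}w)=\nabla\times(\mathfrak p w-\nabla\varphi_w)=w$) for $w\in W_h^{k-1}(K)$, together with $\nabla\times\nabla\Sigma_h^r(K)=0$; hence $\nabla\times V_h^{r-1,k}(K)=W_h^{k-1}(K)$, which we record now since it is exactly surjectivity at the last space.

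\textbf{Exactness.} Exactness at $\mathbb{R}$ is trivial ($\subset$ is injective). Exactness at $\Sigma_h^r(K)$ says $\ker(\nabla)\cap\Sigma_h^r(K)=\mathbb{R}$, which holds because $K$ is connected. Exactness at $W_h^{k-1}(K)$ is the surjectivity of $\nabla\times$ on $V_h^{r-1,k}(K)$, already noted above via \eqref{null-homotopy}. The substantive point is exactness at $V_h^{r-1,k}(K)$: we must show that any $\bm v\in V_h^{r-1,k}(K)$ with $\nabla\times\bm v=0$ lies in $\nabla\Sigma_h^r(K)$. Write $\bm v=\nabla\varphi+\mathfrak p w$ (or $\nabla\varphi+\widetilde{\mathfrak p}w$) with $\varphi\in\Sigma_h^r(K)$ and $w\in W_h^{k-1}(K)$; applying $\nabla\times$ and using \eqref{null-homotopy} gives $0=\nabla\times\bm v=w$, so $\bm v=\nabla\varphi\in\nabla\Sigma_h^r(K)$. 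The remark after \eqref{Vh2} that the right-hand side of \eqref{Vh2} is a direct sum (again a consequence of \eqref{null-homotopy}, since $\mathfrak p w\in\nabla\Sigma_h^r(K)$ forces $w=\nabla\times\mathfrak p w=0$) is what makes this decomposition unambiguous, but exactness does not actually require uniqueness of the decomposition.

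\textbf{Main obstacle.} There is no deep obstacle; the only thing to be careful about is bookkeeping across the several cases in \eqref{Vh2} and the definitions of $W_h^{k-1}(K)$ and $\Sigma_h^r(K)$ for triangles versus rectangles and for the low-order ($k=2,3$) versus high-order regimes. One should check that in the low-order cases the polynomial-degree indices still match up so that $\nabla\Sigma_h^r(K)\subseteq V_h^{r-1,k}(K)$ and that $\widetilde{\mathfrak p}$ (rather than $\mathfrak p$) is used consistently; since $\widetilde{\mathfrak p}w$ differs from $\mathfrak p w$ only by a gradient of an element of $\Sigma_h^r(K)$, and the preceding lemma guarantees $\varphi_w$ has the right polynomial degree, all of the displayed identities ($\nabla\times\widetilde{\mathfrak p}w=w$, direct-sum, exactness) transfer verbatim. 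A sentence handling the bubble functions $B_t$, $B_r$ — noting $\mathfrak p B_t\in\bm P_4(K)$ and $\mathfrak p B_r$, $\widetilde{\mathfrak p}B_r$ lie in the relevant spaces by the polynomial-preserving property — completes the verification that $V_h^{r-1,k}(K)$ is well-defined as stated. The whole argument is then a few lines per assertion.
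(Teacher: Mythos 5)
Your proof is correct and follows essentially the same route as the paper's: the null-homotopy identity \eqref{null-homotopy} gives both the complex property together with the surjectivity $\nabla\times V_h^{r-1,k}(K)=W_h^{k-1}(K)$, and, applied to the decomposition $\bm v=\nabla p+\mathfrak p w$ (or $\nabla p+\widetilde{\mathfrak p}w$), the exactness at $V_h^{r-1,k}(K)$. The paper's argument is identical in substance, merely omitting the trivial checks of exactness at $\mathbb{R}$ and $\Sigma_h^r(K)$ that you spell out.
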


\begin{proof}
Because of the definition \eqref{Vh2} of $V_h^{r-1,k}(K)$  and the null-homotopy identity \eqref{null-homotopy},  we have $\nabla \Sigma^{r}_h(K)\subseteq V^{r-1, k}_h(K)$ and $\nabla\times V^{r-1, k}_h(K)= W_h^{k-1}(K)$. This shows that  \eqref{local-complex} is a complex. It remains to show the exactness. We first show that, for any $\bm v_h\in V^{r-1, k}_h(K)$ for which $\nabla\times\bm v_h=0$, there exists a $p_h\in \Sigma^{r}_h(K)$ s.t. $\bm v_h=\nabla p_h.$
Since $\bm v_h\in V^{r-1, k}_h(K)$, we have $\bm v_h=\nabla p_h+\mathfrak p w_h$ or $\bm v_h=\nabla p_h+\widetilde{\mathfrak p} w_h$ with $p_h\in \Sigma^{r}_h(K)$ and $w_h\in W^{k-1}_h(K)$. By the null-homotopy identity \eqref{null-homotopy} again, $0=\nabla\times\bm v_h=w_h$ (thus $\varphi_{w_h}=0$ when $\widetilde{\mathfrak p}$ is involved). Therefore, $\bm v_h=\nabla p_h.$ Moreover, the curl operator $\nabla\times: V^{r-1, k}_h(K) \to W^{k-1}_h(K)$ is surjective since $\nabla\times V^{r-1, k}_h(K)=W^{k-1}_h(K)$.
 \end{proof}

In the following lemma, we show that $V_h^{r-1,k}(K)$ contains some polynomial subspaces.
\begin{lemma}\label{Vh} Suppose that $r\leq k+1$. Then $\bm P_{r-1}(K)\subseteq V^{r-1, k}_h(K)$.
\end{lemma}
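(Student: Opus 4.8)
The plan is to show $\bm{P}_{r-1}(K) \subseteq V_h^{r-1,k}(K)$ by exploiting the exactness of the local complex \eqref{local-complex} together with a dimension count, rather than by writing down an explicit decomposition of each polynomial vector field. First I would observe that any $\bm{q} \in \bm{P}_{r-1}(K)$ has $\nabla \times \bm{q} \in P_{r-2}(K)$. Since $r \leq k+1$, we have $r-2 \leq k-1$, so $P_{r-2}(K) \subseteq P_{k-1}(K) \subseteq W_h^{k-1}(K)$ (the last inclusion holds with or without the bubble enrichment, and the analogous chain $Q_{r-2}(K) \subseteq Q_{k-1}(K) \subseteq W_h^{k-1}(K)$ holds on rectangles). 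By the null-homotopy identity \eqref{null-homotopy}, $\mathfrak{p}(\nabla \times \bm{q})$ (or $\widetilde{\mathfrak{p}}(\nabla \times \bm{q})$, depending on which case of \eqref{Vh2} applies) lies in $V_h^{r-1,k}(K)$ and has the same curl as $\bm{q}$. Hence $\bm{q} - \mathfrak{p}(\nabla \times \bm{q})$ is curl-free, and by the exactness of \eqref{local-complex} established in the previous lemma, it equals $\nabla p$ for some $p \in \Sigma_h^r(K)$, namely $p \in P_r(K)$ on a triangle or $Q_r(K)$ on a rectangle.

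So the whole argument reduces to the claim that every curl-free polynomial in $\bm{P}_{r-1}(K)$ is the gradient of an element of $\Sigma_h^r(K)$. On a triangle this is immediate: a curl-free field in $\bm{P}_{r-1}(K)$ is $\nabla p$ for some scalar $p$, and then $p \in P_r(K) = \Sigma_h^r(K)$. On a rectangle the only subtlety is whether $\nabla \Sigma_h^r(K) = \nabla Q_r(K)$ is large enough to contain $\{\nabla p : p \in P_r(K)\} \cap \bm{P}_{r-1}(K)$; but $P_r(K) \subseteq Q_r(K)$, so this is clear as well. The polynomial-preserving property of $\mathfrak{p}$ ($u \in P_{k-1} \Rightarrow \mathfrak{p}u \in \bm{P}_k$, and correspondingly for $Q$) guarantees that $\mathfrak{p}(\nabla \times \bm{q})$ genuinely lies in the claimed summand $\mathfrak{p}W_h^{k-1}(K)$ of \eqref{Vh2}, so there is no degree mismatch. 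One should double-check the low-order cases $r = k-1$ with $k \in \{2,3\}$ separately, where $\widetilde{\mathfrak{p}}$ replaces $\mathfrak{p}$: here $\nabla \times \bm{q} \in P_{r-2} = P_{k-3}$, which for $k=2,3$ means $\nabla \times \bm{q}$ is zero or constant, so $\bm{q}$ is already curl-free or differs from a simple explicit field by a gradient, and the argument goes through trivially.

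I do not anticipate a serious obstacle. The one place to be careful is bookkeeping across the several cases in \eqref{Vh2}: for each pairing of $r$ and $k$ one must verify (i) $r-2 \leq k-1$ so that $\nabla \times \bm{P}_{r-1}(K) \subseteq W_h^{k-1}(K)$, and (ii) the Poincaré (or modified Poincaré) image of that curl lands in the correct summand. Both follow directly from $r \leq k+1$ and the polynomial-preserving property, so the proof is short; the key conceptual input is simply that exactness of the local complex lets us split off the curl-free remainder as a gradient in $\Sigma_h^r(K)$.
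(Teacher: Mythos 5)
Your argument is correct, but it is not the route the paper takes. The paper proves the algebraic identity $\bm P_{r-1}(K)=\nabla P_{r}(K)\oplus \mathfrak{p} P_{r-2}(K)$ by combining the easy inclusion $\supseteq$ with a dimension count ($\dim\nabla P_r+\dim P_{r-2}=r(r+1)=\dim\bm P_{r-1}$), and then embeds the two summands into $\nabla\Sigma_h^r(K)$ and $\mathfrak pW_h^{k-1}(K)$, handling the $\widetilde{\mathfrak p}$ case with a ``similarly''. You instead produce the splitting constructively: subtract $\mathfrak p(\nabla\times\bm q)$, which lies in $\mathfrak pW_h^{k-1}(K)$ because $r-2\le k-1$ and in $\bm P_{r-1}(K)$ by the polynomial-preserving property, and observe that the remainder is a curl-free polynomial field, hence a gradient of a potential in $P_r(K)$ (contained in $Q_r(K)$ on rectangles). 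Your version avoids the dimension count and treats triangles and rectangles uniformly, at the cost of invoking polynomial de Rham exactness with degree control; the paper's version needs no exactness input but must redo the count for each shape.

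Two cautions. First, do not justify the step ``the curl-free remainder is a gradient in $\Sigma_h^r(K)$'' by the exactness of \eqref{local-complex}: that lemma applies only to functions already known to lie in $V_h^{r-1,k}(K)$, which is precisely what you are trying to establish, so the appeal is circular. The correct justification is the one you give afterwards: a curl-free field $\bm v\in\bm P_{r-1}(K)$ equals $\nabla p$ with $p\in P_r(K)$, e.g.\ via $p(\bm x)=\int_0^1\bm v(t\bm x)\cdot\bm x\,\d t$. Second, your case bookkeeping for $\widetilde{\mathfrak p}$ is off: by \eqref{Vh2}, $\widetilde{\mathfrak p}$ is used whenever $r=k-1$ (for all $k\ge 2$), not only for $k\in\{2,3\}$, and also when $r=k$ with $k=2,3$. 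Fortunately no special treatment is needed: $\widetilde{\mathfrak p}u=\mathfrak pu-\nabla\varphi_u$ satisfies the same null-homotopy identity and still maps $P_{r-2}(K)$ into $\bm P_{r-1}(K)$ (since $\varphi_u\in P_{r-1}(K)$), so your main argument applies verbatim with $\widetilde{\mathfrak p}$ in place of $\mathfrak p$.
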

\begin{proof}
	We claim that 
		\begin{align}\label{dcmp_Pr}
		\bm P_{r-1}(K)=\nabla P_{r}(K)\oplus \mathfrak{p} P_{r-2}(K).
	\end{align}
	In fact, $\nabla P_{r}(K)\oplus\mathfrak{p} P_{r-2}(K)\subseteq \bm P_{r-1}(K)$. To show \eqref{dcmp_Pr}, we only need to show 
	\[\dim \nabla P_{r}(K)\oplus\mathfrak{p} P_{r-2}(K)=\dim  \bm P_{r-1}(K).\]
	By the null-homotopy identity \eqref{null-homotopy}, the right hand side is a direct sum. Therefore, 
	\begin{align}
		\dim \nabla P_{r}(K)&\oplus\mathfrak{p} P_{r-2}(K)=\dim  \nabla P_{r}(K)+\dim P_{r-2}(K)\\
		=&\frac{(r+1)(r+2)}{2}+\frac{(r-1)r}{2}-1=r(r+1),
	\end{align}
which is exactly the dimension of $\bm P_{r-1}(K)$. 
	
Combining \eqref{dcmp_Pr} and the fact that $P_{r-2}(K)\subseteq W^{k-1}_h(K)$ and $P_{r}(K)\subseteq \Sigma^{r}_h(K)$, we get $\bm P_{r-1}(K)\subseteq \nabla \Sigma^r_h(K)\oplus \mathfrak{p}W^{k-1}_h(K)=V^{r-1, k}_h(K)$.  Similarly, we can prove the lemma for the case when $\widetilde{\mathfrak{p}}$ is involved.
\end{proof}

In the following sections, we will take different values of $r$ to get various families of curl-curl conforming finite elements and complexes. In Section \ref{sec:new}, we take $r=k-1$, and this leads to a new family of simple elements. In Section \ref{sec:existing}, we introduce the other two families of elements by taking $r=k,k+1$, respectively.

\section{A new family of curl-curl conforming elements  $r=k-1$}\label{sec:new}

In this section, we construct a new family of curl-curl conforming elements $V_{h}^{k-2, k}$ by specifying $r=k-1$ in \eqref{local-complex}, i.e., 
\begin{equation}\label{discrete-complex-k-1}
\begin{tikzcd}
0 \arrow{r} &\mathbb{R} \arrow{r}{\subset} & \Sigma_h^{k-1}  \arrow{r}{\nabla} & V_{h}^{k-2, k}   \arrow{r}{\nabla\times} & W_h^{k-1} \arrow{r}&0.
 \end{tikzcd}
\end{equation}
 For simplicity of presentation, we focus on the triangular elements and only mention the rectangular elements in Remark \ref{rmk:rectangular} below.

\subsection{Degrees of freedom and global finite element spaces}\label{sec:dofs}
We define DOFs for the spaces in \eqref{discrete-complex-k-1}.

The DOFs for the Lagrange element $\Sigma^{r}_{h}$ can be given as follows.
\begin{itemize}
\item Vertex DOFs $ M_{v}({u})$ at all the vertices  $v_{i}$ of $K$:
$$
M_{v}(u)=\left\{u\left({v}_{i}\right) \text{ for all vertices $v_i$}\right\}.
$$
\item Edge DOFs $M_{e}(u)$ on all the edges $e_{i}$ of $K$:
\begin{align*} M_{e}(u)=\left\{\int_{e_i} u v \mathrm{d} s\text { for all } v \in P_{r-2}(e_i) \text { and for all edges }e_i\right\}.
\end{align*}
 
\item Interior DOFs $M_{K}(u)$: 
$$M_{K}(u)=\left\{ \int_K u v \mathrm{d} A \text{ for all } v \in P_{r-3}(K) \text{ or } Q_{r-2}(K) \right\}.$$
\end{itemize}
For $ u \in H^{1+\delta}(\Omega)$ with $\delta >0$,  we can
define an $H^1$ interpolation operator $\pi_h$ by the above DOFs. The restriction of $\pi_h$ on $K$ is denoted as $\pi_K$ and defined by 
 \begin{eqnarray}\label{def-inte-H1}
 M_v( u-\pi_K u)=\{0\},\  M_e(u-\pi_Ku)=\{0\},\ \text{and}\  M_K( u-\pi_K u)=\{0\}.
\end{eqnarray}

The DOFs for $W_{h}^{k-1}$ can be given similarly, with only one additional interior integration DOF on $K$ to take care of the interior bubble. We denote $\tilde\pi_h$ as the $H^1$ interpolation operator to $W_h^{k-1}$ by these DOFs.

For the shape function space $V_h^{k-2, k}(K):=\nabla P_{k-1}(K)\oplus \mathfrak p W^{k-1}_h(K)$ of the triangular elements, we define the following DOFs:
%\begin{itemize}
%\item
%$\nabla\times\bm u$ at each vertex,
%\item tangential moments on each edge $e$:
%$$
%\int_{e}\bm{u}\cdot \bm{\tau}_{e}\, dx.
%$$
%\end{itemize}
\begin{itemize}
		\item Vertex DOFs $\bm M_{ {v}}({\bm u})$ at all the vertices $ {v}_{i}$ of $K$:
	\begin{equation}\label{tridef1-1}
	\bm M_{ {v}}({\bm u})=\left\{( \nabla\times {\bm u})(  v_{i}),\; i=1,\;2\;,3\right\}.
	\end{equation}
	\item Edge DOFs $\bm M_{ {e}}(  {\bm u})$ at all the edges $ {e}_i$ of $ {K}$ (with the unit  tangential vector $ {\bm \tau}_i$):
	\begin{align}
		 \bm M_{ {e}}(  {\bm u})=&\left\{\int_{e_i} {\bm u}\cdot  {\bm \tau}_i  {q}\mathrm d {s},\ \forall  {q}\in P_{k-2}( {e}_i), i=1,2,3\right\}\nonumber\\
		 \cup&	\left\{\int_{e_i}\nabla\times{\bm u}q\d s,\ \forall  {q}\in P_{k-3}( {e}_i), i=1,2,3\right\}.\label{tridef1-2}
	\end{align}	
    \item Interior DOFs $\bm M_{ {K}}(  {\bm u})$:
	\begin{align}\label{tridef1-3}
	&\bm M_{ {K}}(  {\bm u})=\left\{\int_{ {K}}  {\bm u}\cdot  {\bm q}\mathrm \d A,\ \forall{{\bm q}}\in  \mathcal{D} \right\},
	%\label{def5}
	%&\bm M_{ {K}}(  {\bm u})=\left\{\int_{\hat{K}} {\bm u}\cdot {\nabla}\times {\varphi}\mathrm d S,\ \forall  {\varphi}\in  \tilde{Q}_{k-3}(  K)\right\},
	\end{align}
	where $\mathcal{D}=\bm P_{k-5}( K)\oplus\widetilde{P}_{k-5} {{\bm x}}\oplus\widetilde{P}_{k-4} {{\bm x}}$ with $ {{\bm x}}=( x_1,\;   x_2)^T$ when $k\geq 5$; $\mathcal{D}={P}_{0} {{\bm x}}$ when $k=4$; $\mathcal{D}=\emptyset$ when $k=2,3$.
		\end{itemize}

\begin{center} 
\begin{figure}
\setlength{\unitlength}{1.2cm}
\begin{picture}(5,6.3)(1.8,-4)
\put(0,0){
\begin{picture}(2,2)
\put(-1, 0){\line(1,2){1}} 
\put(0, 2){\line(1,-2){1}}
\put(-1,0){\line(1,0){2}}
\put(-1.,0){\circle*{0.1}}
\put(1.,0){\circle*{0.1}}
\put(0,2){\circle*{0.1}}
\end{picture}
}

\put(1.5, 1){\vector(1, 0){1}}
\put(1.68, 1.15){$\nabla$}

\put(4,0){
\put(-1, 0){\line(1,2){1}} 
\put(0, 2){\line(1,-2){1}}
\put(-1,0){\line(1,0){2}}
\put(-0.16, 2.06){$\curl$}
\put(-1.14, -0.18){$\curl$}
\put(0.88, -0.18){$\curl$}
\put(-0.75, 0.65){\vector(1, 2){0.3}}
\put(0.75, 0.65){\vector(-1, 2){0.3}}
\put(-0.2, -0.05){\vector(1, 0){0.6}}
}

\put(5.5, 1){\vector(1, 0){1}}
\put(5.75, 1.1){{$\nabla\times$}}
\put(8,0){
\begin{picture}(2,2)
\put(-1, 0){\line(1,2){1}} 
\put(0, 2){\line(1,-2){1}}
\put(-1,0){\line(1,0){2}}
\put(-1, 0){\circle*{0.1}}
\put(1, 0){\circle*{0.1}}
\put(-0, 2){\circle*{0.1}}
\put(-0, 0.5){\circle*{0.1}}
\end{picture}
}

%\end{picture}
%%%%%%%%%%%%%%
%\begin{picture}(2,2)(4,3)
\put(0,-3){
\begin{picture}(2,2)
\put(-1, 0){\line(0,2){2}} 
\put(-1, 2){\line(1,0){2}}
\put(1,2){\line(0,-2){2}}
\put(-1,0){\line(2,0){2}}
\put(-1.,2){\circle*{0.1}}
\put(-1.,0){\circle*{0.1}}
\put(1,2){\circle*{0.1}}
\put(1,0){\circle*{0.1}}
\end{picture}
}

\put(1.5, -2){\vector(1, 0){1}}
\put(1.68, -1.8){$\nabla$}

\put(4,-3){
\put(-1, 0){\line(0,2){2}} 
\put(-1, 2){\line(1,0){2}}
\put(1,2){\line(0,-2){2}}
\put(-1,0){\line(2,0){2}}
\put(-1.2,2.06){$\curl$}
\put(-1.2,-0.2){$\curl$}
\put(0.8,2.06){$\curl$}
\put(0.8,-0.2){$\curl$}
\put(-0.2, 2.06){\vector(1, 0){0.6}}
\put(1.06, 0.65){\vector(0, 1){0.6}}
\put(-1.06, 0.65){\vector(0, 1){0.6}}
\put(-0.2, -0.05){\vector(1, 0){0.6}}
}

\put(5.5, -2){\vector(1, 0){1}}
\put(5.75, -1.8){{$\nabla\times$}}
\put(8,-3){
\begin{picture}(2,2)
\put(-1, 0){\line(0,2){2}} 
\put(-1, 2){\line(1,0){2}}
\put(1,2){\line(0,-2){2}}
\put(-1,0){\line(2,0){2}}
\put(-1.,2){\circle*{0.1}}
\put(-1.,0){\circle*{0.1}}
\put(1,2){\circle*{0.1}}
\put(1,0){\circle*{0.1}}
\put(-0, 1){\circle*{0.1}}
\end{picture}
}
\put(0,-4){$\Sigma_h^1$}
\put(1.5, -4){\vector(1, 0){1}}
\put(1.68, -3.8){$\nabla$}
\put(4,-4){$V_h^{0,2}$}
\put(5.5, -4){\vector(1, 0){1}}
\put(5.75, -3.8){{$\nabla\times$}}
\put(8,-4){$W_h^1$}
\end{picture}

\caption{The lowest-order ($k=2$) finite element complex \eqref{discrete-complex-k-1} in 2D. }
\label{fig:third-family}
\end{figure}
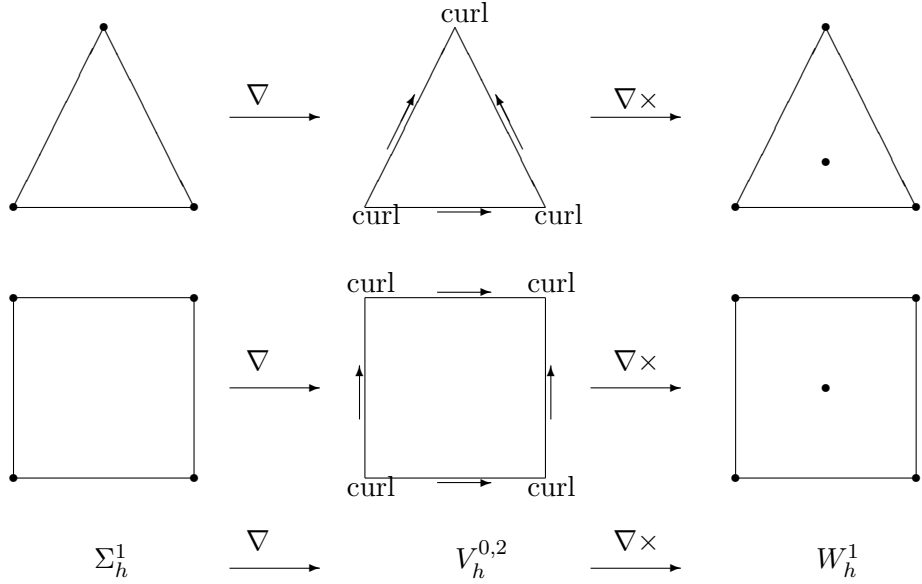
\end{center}

\begin{lemma}\label{well-defined-conditions}
	The DOFs \eqref{tridef1-1}-\eqref{tridef1-3} are well-defined for any ${\bm u}\in \bm H^{1/2+\delta}({K})$ and ${\nabla}\times{\bm u}\in H^{1+\delta}({K})$ with $\delta>0$.
\end{lemma}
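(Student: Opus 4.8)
The plan is to show that every functional listed in \eqref{tridef1-1}--\eqref{tridef1-3} is finite — indeed a bounded linear functional — on the space $X(K):=\{\bm u\in\bm H^{1/2+\delta}(K):\ \nabla\times\bm u\in H^{1+\delta}(K)\}$, using only the Sobolev embedding theorem and the trace theorem on the Lipschitz polygon $K$ in two dimensions. Whenever a polynomial degree index in the definitions becomes negative the corresponding DOF set is empty and there is nothing to check, so it suffices to treat the non-degenerate cases. The only genuinely delicate point is the edge integral of the tangential component $\bm u\cdot\bm\tau_i$: since $\bm u$ is only assumed to live in $\bm H^{1/2+\delta}(K)$, its restriction to a one-dimensional edge sits exactly at the borderline of the trace theorem, and the hypothesis $\delta>0$ is precisely the margin that makes that trace square-integrable; everything else is soft.

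\emph{Vertex DOFs.} In $\mathbb{R}^2$ one has the continuous embedding $H^{1+\delta}(K)\hookrightarrow C(\overline K)$, because $1+\delta$ exceeds $1$, half the space dimension. Hence for $\bm u\in X(K)$ the scalar field $\nabla\times\bm u$ admits a continuous representative on $\overline K$, so the point evaluations $(\nabla\times\bm u)(v_i)$ in \eqref{tridef1-1} are well-defined and bounded by $\|\nabla\times\bm u\|_{1+\delta,K}$.

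\emph{Edge DOFs.} By standard trace theory on a Lipschitz polygon, for $s>1/2$ the restriction to an edge $e$ of $K$ maps $H^{s}(K)$ boundedly into $H^{s-1/2}(e)\hookrightarrow L^{2}(e)$. Applying this with $s=1/2+\delta$ componentwise to $\bm u$ gives $\bm u|_{e_i}\in\bm L^{2}(e_i)$, hence $\bm u\cdot\bm\tau_i\in L^{2}(e_i)$, and $\int_{e_i}\bm u\cdot\bm\tau_i\,q\,\d s$ is finite for every $q\in P_{k-2}(e_i)$ by Cauchy--Schwarz. For the second family of edge functionals we may use either the trace theorem with $s=1+\delta$ or, more cheaply, the embedding from the vertex step: $\nabla\times\bm u\in C(\overline K)$, so $\nabla\times\bm u|_{e_i}\in L^{2}(e_i)$ and $\int_{e_i}(\nabla\times\bm u)\,q\,\d s$ is finite for every $q\in P_{k-3}(e_i)$.

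\emph{Interior DOFs.} Since $\bm u\in\bm H^{1/2+\delta}(K)\subset\bm L^{2}(K)$ and $\mathcal{D}$ consists of polynomial vector fields in all cases ($\mathcal D=\emptyset$ for $k=2,3$, $\mathcal D=P_0\bm x$ for $k=4$, and the larger polynomial space for $k\geq5$), each functional $\int_K\bm u\cdot\bm q\,\d A$ with $\bm q\in\mathcal{D}$ is finite by Cauchy--Schwarz and bounded by $\|\bm u\|_{0,K}$. Combining the three cases yields the assertion; as noted above, the main (and essentially only) subtlety is ensuring the tangential edge trace makes sense, which is exactly where $\delta>0$ enters.
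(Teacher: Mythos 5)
Your proof is correct: the combination of the Sobolev embedding $H^{1+\delta}(K)\hookrightarrow C(\overline K)$ in two dimensions for the vertex and second edge functionals, the trace theorem $H^{1/2+\delta}(K)\to H^{\delta}(e_i)\hookrightarrow L^2(e_i)$ for the tangential edge integrals, and Cauchy--Schwarz for the interior moments is exactly the standard argument, and you correctly identify $\delta>0$ as the hypothesis needed for the edge traces. The paper itself omits the proof, deferring to Lemma 3.4 of \cite{WZZelement}, which proceeds along the same lines, so your write-up simply supplies the argument the authors chose not to repeat.
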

The proof of this lemma is the same as that of Lemma 3.4 in \cite{WZZelement}. We omit it here.

\begin{lemma}
The DOFs for $V^{k-2, k}_{h}(K)$ are unisolvent. 
\end{lemma}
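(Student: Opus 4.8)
The plan is to prove unisolvence by matching $\dim V_h^{k-2,k}(K)$ with the number of degrees of freedom in \eqref{tridef1-1}--\eqref{tridef1-3}, and then checking that a shape function annihilated by all of them is zero. For the count, since the sum in $V_h^{k-2,k}(K)=\nabla P_{k-1}(K)\oplus\mathfrak p W_h^{k-1}(K)$ is direct and $\nabla\times$ restricts to a bijection of $\mathfrak p W_h^{k-1}(K)$ onto $W_h^{k-1}(K)$ by the null-homotopy identity \eqref{null-homotopy}, one has $\dim V_h^{k-2,k}(K)=\dim P_{k-1}(K)-1+\dim W_h^{k-1}(K)$, and a routine tally of the vertex, edge and interior DOFs (the dimension of $\mathcal D$ being $0$ for $k=2,3$, $1$ for $k=4$, and quadratic in $k$ for $k\ge5$) shows the two numbers agree (for instance $6$ when $k=2$). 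So let $\bm u\in V_h^{k-2,k}(K)$ with all DOFs \eqref{tridef1-1}--\eqref{tridef1-3} equal to zero; the goal is $\bm u=0$. Write $\bm u=\nabla p+\mathfrak p w$ with $p\in P_{k-1}(K)$, $w\in W_h^{k-1}(K)$, so that $\nabla\times\bm u=w$.

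\emph{Step 1: $\nabla\times\bm u=0$.} On each edge $e$ the trace $w|_e$ belongs to $P_{k-1}(e)$ (the bubble $B_t$ has vanishing trace on $\partial K$), vanishes at the two endpoints of $e$ by the vertex DOFs \eqref{tridef1-1}, and is $L^2(e)$-orthogonal to $P_{k-3}(e)$ by the second family of edge DOFs in \eqref{tridef1-2}; writing $w|_e=\ell_1\ell_2 g$ with $g\in P_{k-3}(e)$ (and $\ell_1,\ell_2$ the barycentric coordinates of $e$) and testing against $g$ forces $g=0$, so $w$ vanishes on $\partial K$. When $k=2,3$ this already gives $w=cB_t$ (the polynomial part of $w$ has degree $\le k-1<3$ and vanishes on $\partial K$), and Stokes' theorem $\oint_{\partial K}\bm u\cdot\bm\tau\,\mathrm{d}s=\int_K\nabla\times\bm u\,\mathrm{d}A=c\int_KB_t\,\mathrm{d}A$, together with the vanishing of every $\int_{e_i}\bm u\cdot\bm\tau_i\,\mathrm{d}s$ (the $q\equiv1$ case of the first family in \eqref{tridef1-2}), gives $c=0$. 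When $k\ge4$ we get $w=B_t r$ with $r\in P_{k-4}(K)$, and for any $s\in P_{k-4}(K)$ integration by parts gives
\begin{equation*}
\int_K(\nabla\times\bm u)\,s\,\mathrm{d}A=\int_K\bm u\cdot(\bm\nabla\times s)\,\mathrm{d}A+\oint_{\partial K}(\bm u\cdot\bm\tau)\,s\,\mathrm{d}s ,
\end{equation*}
whose right-hand side vanishes since $\bm\nabla\times s\in\bm P_{k-5}(K)\subseteq\mathcal D$ (interior DOFs \eqref{tridef1-3}; the term is simply $0$ when $k=4$) and $\bm u\cdot\bm\tau|_e$ is $L^2(e)$-orthogonal to $P_{k-2}(e)\supseteq P_{k-4}(e)$ (first family of edge DOFs in \eqref{tridef1-2}). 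Taking $s=r$ gives $\int_KB_t r^2\,\mathrm{d}A=0$, hence $r=0$; thus $\nabla\times\bm u=0$ in every case.

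\emph{Step 2: $\bm u=0$.} By exactness of the local complex \eqref{discrete-complex-k-1}, $\bm u=\nabla p$ for some $p\in P_{k-1}(K)$, unique up to a constant. The first family of edge DOFs forces $\partial_{\bm\tau}(p|_e)\in P_{k-2}(e)$ to be $L^2(e)$-orthogonal to all of $P_{k-2}(e)$, so $p$ is constant along each edge, hence on $\partial K$; after subtracting that constant, $p$ vanishes on $\partial K$, so $p=0$ when $k=2,3$ and $p=B_t\tilde r$ with $\tilde r\in P_{k-4}(K)$ when $k\ge4$. In the latter case, for every $\bm q\in\mathcal D$ the interior DOFs and integration by parts give $0=\int_K\nabla p\cdot\bm q\,\mathrm{d}A=-\int_Kp\,(\nabla\cdot\bm q)\,\mathrm{d}A$; since $\nabla\cdot\mathcal D=P_{k-4}(K)$ (Euler's identity yields $\nabla\cdot(q\bm x)=(j+2)q$ for $q\in\widetilde P_j$, and $\nabla\cdot\bm P_{k-5}(K)=P_{k-6}(K)$), picking $\bm q$ with $\nabla\cdot\bm q=\tilde r$ yields $\int_KB_t\tilde r^2\,\mathrm{d}A=0$, so $\tilde r=0$. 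Either way $p$ is constant and $\bm u=\nabla p=0$.

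I expect Step 1 for $k\ge4$ to be the crux: one must see that $\mathcal D=\bm P_{k-5}(K)\oplus\widetilde P_{k-5}\bm x\oplus\widetilde P_{k-4}\bm x$ is arranged precisely so that, after integrating by parts, the interior DOFs on $\bm u$ control all moments of $\nabla\times\bm u$ against $P_{k-4}(K)$ while the boundary terms are exactly the tangential edge DOFs; Step 2 similarly hinges on $\nabla\cdot\mathcal D=P_{k-4}(K)$. Keeping the degree bookkeeping straight — the two homogeneous blocks $\widetilde P_{k-5}\bm x,\ \widetilde P_{k-4}\bm x$ and the small cases $k=2,3,4$ — is the only genuinely fiddly part; the remainder is routine.
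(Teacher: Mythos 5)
Your proof is correct and follows essentially the same route as the paper: match the DOF count against $\dim V_h^{k-2,k}(K)$, then show a shape function annihilated by all DOFs has vanishing curl and finally is a vanishing gradient, using the factorization by $\lambda_1\lambda_2\lambda_3$ and the interior moments against $\mathcal D$. In fact your Step 1 is more explicit than the paper, which disposes of $\nabla\times\bm u=0$ by simply invoking ``the unisolvence of the DOFs of $W_h^{k-1}(K)$'' — a claim that tacitly relies on exactly the integration-by-parts recovery of the interior moments of $\nabla\times\bm u$ from $\int_K\bm u\cdot\bm\nabla\times s$ that you carry out.
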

\begin{proof}
The decomposition \eqref{Vh2} is a direct sum. Therefore $\dim V^{k-2, k}_{h}(K) =\dim \nabla \Sigma^{k-1}_h(K)+\dim W^{k-1}_h(K)={k(k+1)-1}$ when $k\geq 4$ and $\dim V^{k-2, k}_{h}(K)=k(k+1)$ when $k=2,3$.  By counting the number of DOFs, the DOFs have the same dimension. Then it suffices to show that if all the DOFs vanish on a function $\bm{u}$, then $\bm{u}=0$. To see this, we first observe that $\nabla\times \bm{u}=0$ by the unisolvence of the DOFs of $W^{k-1}_h(K)$. Then $\bm{u}=\nabla\phi\in P_{k-2}(K)$ for some $\phi\in \Sigma^{k-1}_h(K)$.  By the edge DOFs of $V^{k-2, k}_{h}(K)$, 
$\bm u\cdot \bm {\tau}=0$ on edges. Then there exists some $\psi \in P_{k-4}(K)$ such that $\phi=\lambda_{1}\lambda_{2}\lambda_{3}\psi$. Choosing $\bm{q}\in P_{k-4}(K)\bm x$ for which $\nabla\cdot\bm q=\psi$, we have by the interior DOFs:
% we can choose $\phi$ as 
%\[\phi=x_1x_2(1-x_1-x_2)\psi \text{ for some $\psi \in P_{k-4}(K)$}.\]
%Note that we only write down for the reference case.
%Applying integration by parts for the vanishing interior DOFs, we obtain
\[0=\left(\bm u,\bm q\right)=\left(\nabla \phi,\bm q\right)=-\left(\phi,\nabla\cdot\bm q\right)=\left(\lambda_{1}\lambda_{2}\lambda_{3}{\psi},{\psi}\right).\]
This implies that $\psi=0$ and hence $\phi=0$ and $\bm u=0$.
\end{proof}

Provided $\bm u \in \bm H^{1/2+\delta}(\Omega)$, and $ \nabla \times \bm u \in H^{1+\delta}(\Omega)$ with $\delta >0$ (see Lemma \ref{well-defined-conditions}),  we can
define an $H(\curl^2)$ interpolation operator $\Pi_h$ whose restriction on $K$ is denoted as $\Pi_K$ and defined by
 \begin{eqnarray}\label{def-inte-01}
\bm M_v(\bm u-\Pi_K\bm u)=\{0\},\ \bm M_e(\bm u-\Pi_K\bm u)=\{0\},\ \text{and}\ \bm M_K(\bm u-\Pi_K\bm u)=\{0\},
\end{eqnarray}
where $\bm M_v,\ \bm M_e$ and $\bm M_K$ are the sets of DOFs in \eqref{tridef1-1}-\eqref{tridef1-3}.

Gluing the local spaces by the above DOFs, we obtain the global finite element spaces $\Sigma_h^{k-1}$, $V_h^{k-2, k}$ and $W_h^{k-1}$. 

% with the space function space $V_h(K)$ and the DOFs given above, we have constructed the space $V_h$. In light of the definitions of $\Sigma_h$, $V_h$, and $W_h$, the inclusions $\nabla\Sigma_h\subseteq V_h$ and $\nabla\times V_h\subseteq W_h$ hold. 
\begin{lemma} The conformity holds:
$$
V^{k-2, k}_{h}\subset H(\curl^2; \Omega).
$$
\end{lemma}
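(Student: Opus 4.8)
The plan is to reduce $H(\curl^2;\Omega)$-conformity to two local-to-global continuity requirements: for a piecewise-smooth field $\bm u$ that is elementwise in $V_h^{k-2,k}(K)$, one has $\bm u\in H(\curl^2;\Omega)$ if and only if the tangential trace $\bm u\cdot\bm\tau$ is single-valued across each interior edge and the scalar $\nabla\times\bm u$ is single-valued (i.e.\ in $H^1$) across each interior edge. This is the standard patching criterion: $\bm u\in \bm L^2$ always; $\nabla\times\bm u\in L^2$ needs the tangential component of $\bm u$ to match on shared edges so that no Dirac mass appears on edges when we differentiate; and $(\nabla\times)^2\bm u\in\bm L^2$ needs $\nabla\times\bm u$ to have no jump across edges, i.e.\ $\nabla\times\bm u\in H^1(\Omega)$, which for a piecewise polynomial is equivalent to continuity at interelement boundaries. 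I would state this criterion first (citing, e.g., the analogous argument in \cite{WZZelement} or the general trace results behind \eqref{2D:quad-curl}), so the rest is a matter of checking that the chosen DOFs enforce exactly these two matchings.

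Next I would verify tangential continuity of $\bm u$. On an interior edge $e$ shared by $K_1,K_2$, the restriction $\bm u|_{K_i}\cdot\bm\tau_e$ is a polynomial on $e$; by Lemma~\ref{Vh} we have $\bm P_{k-2}(K_i)\subseteq V_h^{k-2,k}(K_i)$, but more to the point I only need that $\bm u|_{K_i}\cdot\bm\tau_e\in P_{k-2}(e)$, which follows from the structure $V_h^{k-2,k}(K)=\nabla\Sigma_h^{k-1}(K)\oplus(\widetilde{\mathfrak p}\text{ or }\mathfrak p)W_h^{k-1}(K)$: the gradient part contributes the tangential derivative of a $P_{k-1}(e)$ function, hence a $P_{k-2}(e)$ polynomial, and the Poincaré/modified-Poincaré part contributes a constant (by construction of $\widetilde{\mathfrak p}$) or a $P_{k-2}(e)$ polynomial (for $\mathfrak p$ applied to a degree $\le k-2$ polynomial; the bubble terms vanish on $e$). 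Then the shared vertex DOFs pin down the endpoint values of $\nabla\times\bm u$ but — the relevant point — the edge DOFs $\int_{e}\bm u\cdot\bm\tau_e\,q\,\d s$ for all $q\in P_{k-2}(e)$ together with the fact that $\bm u\cdot\bm\tau_e$ has degree $\le k-2$ on $e$ (and with the endpoint tangential information recovered from the vertex DOFs of $\nabla\times\bm u$ combined with the $\curl$-edge DOFs — I should double-check the bookkeeping here) force $\bm u\cdot\bm\tau_e$ to be the same polynomial viewed from either side.

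Then I would verify that $\omega:=\nabla\times\bm u$ lies in $H^1(\Omega)$. Elementwise $\omega|_K\in W_h^{k-1}(K)$, and on each interior edge $e$ the bubble enrichments $B_t,B_r$ vanish, so $\omega|_{K_i}\cdot$, i.e.\ the trace $\omega|_{K_i}|_e$, is a polynomial of degree $\le k-2$ on $e$ (degree $\le k-1$ for the larger-$k$ cases — again the $Q$ vs.\ $P$ distinction needs a line of care). The DOFs of $W_h^{k-1}$ are the Lagrange-type DOFs: vertex values, edge moments against $P_{k-2}(e)$, interior moments; equivalently they are exactly the DOFs appearing in \eqref{tridef1-1}–\eqref{tridef1-3} that involve $\nabla\times\bm u$ (the vertex DOFs $(\nabla\times\bm u)(v_i)$ and the edge DOFs $\int_{e_i}\nabla\times\bm u\,q\,\d s$ for $q\in P_{k-3}(e_i)$, plus the interior bubble DOF). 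Since these DOFs are exactly those that make $\tilde\pi_h$ well-defined into the \emph{global} $H^1$-conforming space $W_h^{k-1}$, matching them across $e$ forces $\omega|_{K_1}|_e=\omega|_{K_2}|_e$, i.e.\ $\omega\in H^1(\Omega)$; here I would invoke the already-established unisolvence on $W_h^{k-1}(K)$ and the observation that a continuous piecewise polynomial is in $H^1$. Hence both matching conditions hold and the conformity criterion gives $V_h^{k-2,k}\subset H(\curl^2;\Omega)$.

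The main obstacle I anticipate is the degree bookkeeping on edges: making fully precise that the tangential trace $\bm u\cdot\bm\tau_e$ really has degree $\le k-2$ (so that the $P_{k-2}(e)$ edge moments plus the endpoint data suffice to determine it), and — for the rectangular case with $Q$-type spaces — tracking the analogous statement for $\omega$ on edges, together with checking that the $\curl$-related edge DOFs of $V_h^{k-2,k}$ coincide with the edge DOFs of the global $W_h^{k-1}$ used to define $\tilde\pi_h$. Everything else is the routine patching argument; I would phrase it so that it parallels the corresponding lemma in \cite{WZZelement}, noting that the only genuinely new ingredient is the role of $\widetilde{\mathfrak p}$ (via Lemma~2.1) in guaranteeing that, in the low-order cases $k=2,3$, the Poincaré part contributes only a \emph{constant} tangential trace on each edge, which is what allows the reduced edge DOF set to still enforce continuity.
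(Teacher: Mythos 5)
Your proposal is correct and follows essentially the same route as the paper: tangential continuity of $\bm u$ via the shared edge moments against $P_{k-2}(e)$ (the trace lies in $P_{k-2}(e)$ because the definition uses $\widetilde{\mathfrak p}$, whose tangential trace is constant on every edge, not only in the low-order cases), plus $H^1$-continuity of $\nabla\times\bm u$ because the curl lands in the globally $H^1$-conforming Lagrange-type space $W_h^{k-1}$. The bookkeeping you flag as a worry is not actually needed: the $k-1$ moments against $P_{k-2}(e)$ by themselves uniquely determine the degree-$(k-2)$ tangential trace, with no recourse to vertex or curl data.
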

\begin{proof}
To prove the conformity, we shall show that the tangential component of $\bm u\in V_h^{k-2,k}$ is single-valued on each edge $e$ and $\nabla\times\bm u\in H^1(\Omega).$   
It is straightforward to see $\nabla\times\bm u\in H^1(\Omega)$ since  $\nabla\times V^{k-2, k}_{h}\subseteq W^{k-1}_h\subset H^{1}(\Omega)$.
From the definition of $V_h^{k-2,k}$, the tangential component of $\bm u$ is a polynomial of order $k-2$ on each edge $e$. The $k-1$ DOFs in the first set of \eqref{tridef1-2} can determine uniquely the tangential component of $\bm u$. 
\end{proof}

\subsection{Global finite element complexes for the quad-curl problem}

The global finite element spaces lead to a complex which is exact on contractible domains.
\begin{theorem}
The complex \eqref{discrete-complex-k-1}  is  exact on contractible domains.
\end{theorem}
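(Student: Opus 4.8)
The plan is to view \eqref{discrete-complex-k-1} as a subcomplex of the exact continuous complex \eqref{2D:quad-curl} and to bootstrap its exactness from the local exactness of \eqref{local-complex}. First one checks the complex property. The inclusion $\nabla\Sigma_h^{k-1}\subseteq V_h^{k-2,k}$ holds because, for $p_h\in\Sigma_h^{k-1}$, the restriction $\nabla p_h|_K$ lies in $\nabla\Sigma_h^{k-1}(K)\subseteq V_h^{k-2,k}(K)$ by \eqref{Vh2}, and since $p_h$ is continuous, $\nabla p_h$ has single-valued tangential trace and vanishing curl, so it matches all the inter-element DOFs \eqref{tridef1-1}--\eqref{tridef1-3} and is a genuine element of the glued space $V_h^{k-2,k}$; the relations $(\nabla\times)\nabla=0$ and $\nabla\times V_h^{k-2,k}\subseteq W_h^{k-1}$ are already available. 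Exactness at $\mathbb R$ and at $\Sigma_h^{k-1}$ is immediate: the inclusion is injective, and $\nabla p_h=0$ on the connected domain $\Omega$ forces $p_h$ to be constant, so $\ker\nabla=\mathbb R$.

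For exactness at $V_h^{k-2,k}$, let $\bm u_h\in V_h^{k-2,k}$ with $\nabla\times\bm u_h=0$. Since $\bm u_h\in H(\curl^2;\Omega)$ has vanishing curl on the contractible $\Omega$, exactness of \eqref{2D:quad-curl} gives $\bm u_h=\nabla p$ with $p\in H^1(\Omega)$, unique up to an additive constant. On each $K$, $\bm u_h|_K\in V_h^{k-2,k}(K)$ has zero curl, so the local exactness of \eqref{local-complex} yields $\bm u_h|_K=\nabla q_K$ with $q_K\in\Sigma_h^{k-1}(K)=P_{k-1}(K)$; hence $p|_K$ differs from $q_K$ by a constant and therefore lies in $P_{k-1}(K)$. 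Thus $p$ is a continuous, piecewise-$P_{k-1}$ function, i.e. $p\in\Sigma_h^{k-1}$, and $\bm u_h=\nabla p\in\nabla\Sigma_h^{k-1}$.

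The crux is exactness at $W_h^{k-1}$, i.e. surjectivity of $\nabla\times\colon V_h^{k-2,k}\to W_h^{k-1}$. Given $w_h\in W_h^{k-1}\subset H^1(\Omega)$, I would first produce a sufficiently regular preimage: solving $-\Delta\phi=w_h$ with $\phi|_{\partial\Omega}=0$ and setting $\bm w=\bm{\nabla\times}\phi$ gives $\nabla\times\bm w=-\Delta\phi=w_h$ and $\bm{\nabla\times}\nabla\times\bm w=\bm{\nabla\times}w_h\in\bm L^2(\Omega)$, so $\bm w\in H(\curl^2;\Omega)$; since $w_h$ is a continuous piecewise polynomial it lies in $H^{1+\delta}(\Omega)$ for some $\delta\in(0,1/2)$, and elliptic regularity on the Lipschitz domain gives $\phi\in H^{3/2+\delta'}$, hence $\bm w\in\bm H^{1/2+\delta'}$ with $\nabla\times\bm w\in H^{1+\delta}$, which is exactly the regularity under which $\Pi_h\bm w$ is defined by \eqref{def-inte-01}. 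Using the commutativity $(\nabla\times)\Pi_h=\tilde\pi_h(\nabla\times)$, which can be read off from the DOFs, and the fact that $\tilde\pi_h$ acts as the identity on $W_h^{k-1}$, we get $\nabla\times\Pi_h\bm w=\tilde\pi_h w_h=w_h$ with $\Pi_h\bm w\in V_h^{k-2,k}$. An alternative that sidesteps the regularity discussion: having established exactness at the three earlier spots, surjectivity onto $W_h^{k-1}$ is equivalent to the identity $\dim\Sigma_h^{k-1}-\dim V_h^{k-2,k}+\dim W_h^{k-1}=1$, which follows by counting DOFs per vertex, edge and triangle and using $V-E+F=1$ for the contractible triangulation.

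I expect this final step to be the only real obstacle, since it is the sole place where the argument is not purely formal: one must either justify the mild Sobolev regularity of the continuous preimage together with the commuting relation for $\Pi_h$, or carry out the routine but case-dependent global dimension count (treated according to the value of $k$, because of the bubble enrichment of $W_h^{k-1}$ for $k=2,3$ and the different interior spaces $\mathcal D$). All remaining steps reduce directly to connectedness of $\Omega$, the conformity $V_h^{k-2,k}\subset H(\curl^2;\Omega)$, the exactness of \eqref{2D:quad-curl}, and the local exactness of \eqref{local-complex}.
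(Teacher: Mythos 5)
Your proposal is correct, and its core coincides with the paper's proof: the authors also establish surjectivity of $\nabla\times$ onto $W_h^{k-1}$ by the global dimension count $\dim V^{k-2,k}_h=\dim W^{k-1}_h+\dim\Sigma^{k-1}_h-1$ together with Euler's formula $\mathcal V-\mathcal E+\mathcal F=1$, exactly your second alternative. Where you genuinely diverge is in the two other ingredients. For exactness at $V_h^{k-2,k}$ the paper simply cites the exactness of the standard finite element de Rham complexes, observing that the curl-free part of $V_h^{k-2,k}$ sits inside a second N\'ed\'elec space; you instead glue a potential directly, obtaining $p\in H^1(\Omega)$ from the continuous complex, showing $p|_K\in P_{k-1}(K)$ from the local exactness of \eqref{local-complex}, and concluding $p\in\Sigma_h^{k-1}$ from $H^1$-conformity of piecewise polynomials. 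This is more self-contained and avoids the external reference, at the cost of a few lines. Your first route to surjectivity (regularized preimage $\bm w=\bm{\nabla\times}\phi$ plus the commuting property $\nabla\times\Pi_h=\tilde\pi_h\nabla\times$) is also sound on the polygonal domains actually used here, where $\phi\in H^{3/2+\delta'}$ is available, and it has the virtue of producing an explicit preimage rather than a mere existence statement; but it leans on Lemma \ref{commute} and on elliptic regularity, whereas the dimension count is purely combinatorial. Either way the argument closes, so the proposal stands.
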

\begin{proof}
We first show the exactness at $V^{k-2, k}_h$. To this end, we show that, for any $\bm v_h\in V^{k-2, k}_h\subset H(\curl^2;\Omega)$ satisfying $\nabla\times\bm v_h=0$, there exists  $p\in \Sigma^{k-1}_h$ such that $\bm v_h=\nabla p$. Actually, this follows from the exactness of the standard finite element differential forms (e.g., \cite{arnold2018finite}) and the fact that the curl-free part of $V^{k-2, k}_h$ is a subspace of the second N\'ed\'elec space of degree $k-2$.  To prove the exactness at $W^{k-1}_h$, that is to prove the operator $\nabla\times$ from $V^{k-2, k}_h$ to $W^{k-1}_h$ is surjective,  we  count the dimensions. The dimension count of the Lagrange elements reads:
 \[\dim \Sigma^{k-1}_h=\mathcal V+(k-2)\mathcal E+\frac{1}{2}(k-3)(k-2)\mathcal F,\]
where $\mathcal V$, $\mathcal E$, and $\mathcal F$ denote the number of vertices, edges, and 2D cells, respectively.
Moreover, $\dim W^{k-1}_h= \dim \Sigma^{k-1}_h$ for $k \geq 4$ and $\dim W^{k-1}_h= \dim \Sigma^{k-1}_h+\mathcal F$ for $k=2,3$.
From the DOFs \eqref{tridef1-1} -\eqref{tridef1-3}, 
\begin{align*}
	\dim V^{k-2, k}_h&=\mathcal V+(2k-3)\mathcal E+(k^2-5k+5)\mathcal F \text{ for } k\geq 4,\\
&\dim V^{k-2, k}_h=\mathcal V+(2k-3)\mathcal E\text{ for } k =2,3.
\end{align*}
From the above dimension count, we have 
\[\dim V^{k-2, k}_h=\dim W^{k-1}_h+\dim \Sigma^{k-1}_h-1,\]
where we have used Euler's formula $\mathcal V-\mathcal E+\mathcal F=1$.
This completes the proof.
\end{proof}

We summarize the interpolations defined in Section \ref{sec:dofs} in the following diagram.
\begin{equation}\label{2complex}
\begin{tikzcd}
0 \arrow[r] 
& \mathbb{R} \arrow[r,"\subset"]  & H^1(\Omega) \arrow[d ]\arrow[r,"\nabla"]  & H(\curl^2;\Omega)\arrow[r,"\nabla\times"]\arrow[d ]  & H^1(\Omega)\arrow[r]\arrow[d ]& 0\\
 0 \arrow[r] 
& \mathbb{R} \arrow[r,"\subset"]  & W \arrow[d, "\pi_h" ]\arrow[r,"\nabla"]  & V\arrow[r,"\nabla\times"]\arrow[d, "\Pi_h" ]  &W\arrow[r]\arrow[d, "\tilde{\pi}_h" ]& 0\\
0 \arrow[r] & \mathbb{R} \arrow[r,"\subset"]  & \Sigma^{k-1}_h\arrow[r,"\nabla"]  & V^{k-2, k}_h\arrow[r,"\nabla\times"]  & W^{k-1}_h\arrow[r]& 0,
\end{tikzcd}
\end{equation}
where $W$ and $V$ are two subspaces of $H^1(\Omega)$ and $H(\curl^2;\Omega)$ in which $\pi_h$ (or $\tilde\pi_h$) and $\Pi_h$ are well-defined. 

Now we show that the interpolations in \eqref{2complex} commute with the differential operators.  This result will play a key role in the error analysis below for discretizing the quad-curl problem.
\begin{lemma}\label{commute}
The last two rows of the complex \eqref{2complex} are a commuting diagram, i.e.,
\begin{align}
	\nabla\pi_h u&=\Pi_h\nabla u \text{ for all } u\in W,\label{Pih_and_pih}\\
	\nabla\times\Pi_h \bm u&=\tilde\pi_h\nabla\times\bm u \text{ for all } \bm u\in V.\label{Pih_and_tildepih}
\end{align}
\end{lemma}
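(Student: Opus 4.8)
The plan is to verify each of the two commuting relations by exploiting the pointwise structure of the interpolants together with the null-homotopy identity \eqref{null-homotopy} and the exactness of the local complex \eqref{local-complex}. I would work element-by-element, comparing the two sides against the DOFs defining $\Pi_K$ and $\tilde\pi_K$.

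For \eqref{Pih_and_pih}: given $u\in W$, I want $\Pi_K(\nabla u)=\nabla(\pi_K u)$ on each $K$. Note $\nabla(\pi_K u)\in\nabla\Sigma^{k-1}_h(K)\subset V^{k-2,k}_h(K)$, so it is a legitimate candidate, and since the DOFs of $V^{k-2,k}_h(K)$ are unisolvent it suffices to check that $\nabla u$ and $\nabla(\pi_K u)$ give the same values on all DOFs \eqref{tridef1-1}--\eqref{tridef1-3}. First, $\curl\nabla=0$, so both functions annihilate every DOF involving $\nabla\times(\cdot)$ — this kills the vertex DOFs \eqref{tridef1-1} and the second set of edge DOFs in \eqref{tridef1-2}. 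For the first set of edge DOFs in \eqref{tridef1-2}, the tangential component $\nabla u\cdot\bm\tau_e=\nabla_e(u|_e)$ depends only on the edge trace of $u$; since $\pi_K u$ matches $u$ at the edge vertices and in the moments against $P_{r-2}(e)$, the traces $u|_e$ and $(\pi_K u)|_e$ coincide as polynomials of degree $k-1$ on $e$, hence their tangential derivatives agree and the edge DOFs match. For the interior DOFs \eqref{tridef1-3}, I integrate by parts: $\int_K\nabla u\cdot\bm q=-\int_K u\,\div\bm q+\int_{\partial K}u(\bm q\cdot\bm n)$; the boundary term is determined by edge traces (already matched) and the volume term by $\int_K u\,v$ for $v=\div\bm q$ of appropriate degree, which is matched by the interior/edge DOFs of $\pi_K$ once one checks that $\div\mathcal D$ lies in the polynomial space controlled by $M_K$ and $M_e$. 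Hence all DOFs agree and \eqref{Pih_and_pih} follows by unisolvence.

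For \eqref{Pih_and_tildepih}: given $\bm u\in V$, both $\nabla\times\Pi_K\bm u$ and $\tilde\pi_K\nabla\times\bm u$ lie in $W^{k-1}_h(K)$ (using $\nabla\times V^{k-2,k}_h(K)=W^{k-1}_h(K)$ from Lemma on the local complex), so by unisolvence of the DOFs of $W^{k-1}_h(K)$ it suffices to match them on those DOFs. The DOFs of $W^{k-1}_h$ are of Lagrange type (vertex values, edge moments, interior moments, plus one bubble moment). The vertex values of $\nabla\times\Pi_K\bm u$ equal the vertex DOFs \eqref{tridef1-1} of $\bm u$, which equal the vertex values of $\nabla\times\bm u$, matched by $\tilde\pi_K$. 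The edge moments of $\nabla\times\Pi_K\bm u$ against $P_{k-3}(e)$ are exactly the second set of edge DOFs \eqref{tridef1-2} of $\bm u$, i.e. the edge moments of $\nabla\times\bm u$, matched by $\tilde\pi_K$. For the interior moments, I integrate by parts the other way: $\int_K(\nabla\times\Pi_K\bm u)\,v=\int_K\Pi_K\bm u\cdot\bm\nabla\times v+\text{(boundary terms)}$, where $\bm\nabla\times v$ ranges over a space that I must check is contained in $\mathcal D$ (up to the edge-controlled boundary pieces), so these are determined by the interior DOFs \eqref{tridef1-3} of $\bm u$; the boundary terms reduce to tangential edge data already handled. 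The bubble moment is handled the same way, noting $\bm\nabla\times B_t$ (resp. $\bm\nabla\times B_r$) is the extra direction accounted for in $\mathcal D$ or absorbed by $\widetilde{\mathfrak p}$; alternatively, since $\curl\widetilde{\mathfrak p}w=w$ one can argue directly on the complementary part. Matching all DOFs gives \eqref{Pih_and_tildepih}.

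The main obstacle, and the place requiring genuine care rather than routine work, is the bookkeeping in the interior-DOF steps: one must verify precisely that the polynomial space swept out by $\div\bm q$ (for the first relation) and by $\bm\nabla\times v$ (for the second), together with the traces produced by integration by parts on $\partial K$, is exactly controlled by the DOF sets $M_K,M_e$ of the Lagrange-type interpolants and by the design space $\mathcal D$ — in particular tracking the homogeneous pieces $\widetilde P_{k-5}\bm x\oplus\widetilde P_{k-4}\bm x$ and the low-order exceptional cases $k=2,3,4$ and the bubble. One also must be careful that $\Pi_K$ is well-defined on $\nabla u$ and on $\bm u$, i.e. the regularity hypotheses of Lemma \ref{well-defined-conditions} hold for elements of $W$ and $V$ respectively, which is why \eqref{2complex} restricts to the intermediate spaces $W\subset H^1(\Omega)$ and $V\subset H(\curl^2;\Omega)$; granting this, the argument above is complete modulo the polynomial-degree bookkeeping.
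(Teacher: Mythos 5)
Your overall strategy is exactly the paper's: work element by element, observe that both candidates lie in the local shape--function space, and match all the DOFs \eqref{tridef1-1}--\eqref{tridef1-3} (resp.\ the Lagrange-type DOFs of $W_h^{k-1}$) via $\curl\nabla=0$ and integration by parts; the paper writes this out only for \eqref{Pih_and_pih} and leaves \eqref{Pih_and_tildepih} as ``a similar trick,'' which your second half fills in along the intended lines.

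One step, however, is justified incorrectly. For the first set of edge DOFs you assert that ``the traces $u|_e$ and $(\pi_K u)|_e$ coincide as polynomials of degree $k-1$ on $e$.'' This is false for general $u\in W\subset H^1(\Omega)$: the trace $u|_e$ is not a polynomial, and matching vertex values plus moments against $P_{k-3}(e)$ does not force the traces to coincide. What is true, and what the paper uses, is that the relevant \emph{moments} of the tangential derivatives agree: for $q\in P_{k-2}(e)$,
\begin{equation*}
\int_{e}\nabla_e\bigl(u-\pi_K u\bigr)\,q\,\d s
=\bigl[(u-\pi_K u)q\bigr]_{v_1}^{v_2}-\int_{e}(u-\pi_K u)\,\nabla_e q\,\d s=0,
\end{equation*}
since $u-\pi_K u$ vanishes at the vertices and $\nabla_e q\in P_{k-3}(e)$ lies in the test space of the edge DOFs of $\pi_K$. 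With that repair (the same integration-by-parts device you already apply to the interior DOFs), and the polynomial-degree bookkeeping you correctly flag as the remaining routine work ($\nabla\cdot\mathcal D\subseteq P_{k-4}(K)$, $\bm q\cdot\bm n|_e\in P_{k-3}(e)$, and the low-order/bubble cases), your argument coincides with the paper's proof.
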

\begin{proof}
	We only prove \eqref{Pih_and_pih}. A similar trick can be used to prove \eqref{Pih_and_tildepih}.
	From the diagram \eqref{2complex}, we know both $\Pi_h\nabla u$ and $\nabla\pi_h u$ are in the space $V_h^{k-2,k}$. It suffices to prove that the DOFs \eqref{tridef1-1}-\eqref{tridef1-3} for $\Pi_h\nabla u$ and $\nabla\pi_h u$ agree element by element. 
	For a given element $K$ with a vertex $v_i$, we first have
	\[\nabla\times\big(\Pi_h\nabla u-\nabla \pi_h u\big)(v_i)=\nabla\times\big(\nabla u-\nabla \pi_h u\big)(v_i)=0.\]	
	On an edge $e_i$ with a tangent vector $\bm \tau_i$ and two vertices $v_1$ and $v_2$, for any $q\in P_{k-2}(e_i)$,  we derive
	\begin{align*}
		&\int_{e_i}\big(\Pi_h\nabla u-\nabla\pi_h u\big)\cdot\bm \tau_i q\d s
		=\int_{e_i}\big(\nabla u-\nabla\pi_h u\big)\cdot\bm \tau_i q\d s\\
		=p(v_2)&(u-\pi_h u)(v_2)-p(v_1)(u-\pi_h u)(v_1)-\int_{e_i}\big( u-\pi_h u\big)\frac{\partial q}{\partial \bm \tau_i} \d s
		= 0.
	\end{align*}
Here we used integration by parts and the definition of the interpolations.
By the definition of $\Pi_h$, we have 
	\begin{align*}
		\int_{e_i}\nabla\times\big(\Pi_h\nabla u-\nabla\pi_h u\big) q\d s
		= 0.
	\end{align*}
	For the interior DOFs, we see that for any $\bm q\in \bm P_{k-3}(K)$,
	\begin{align*}
		&\int_K \big(\Pi_h\nabla u-\nabla\pi_h u\big)\cdot\bm q\d S=\int_K \big(\nabla u-\nabla\pi_h u\big)\cdot\bm q\d S\\
		=&-\int_K \big( u-\pi_h u\big)\nabla\cdot\bm q\d S+\int_{\partial K} \big( u-\pi_h u\big)\bm q\cdot \bm n\d s=0.
	\end{align*}	 
	This completes the proof.	
	\end{proof}

\begin{theorem}
If $\bm u\in \bm H^{s-1}(\Omega)$ and $\nabla\times\bm u\in  H^s(\Omega)$, $ 1+\delta\leq s \leq k$ with $\delta>0$, then we have the following error estimates for the interpolation $\Pi_h$,
	\begin{align}
	&\left\|\bm u-\Pi_h\bm u\right\|\leq Ch^{s-1}(\left\|\bm u\right\|_{s-1}+\left\|\nabla\times\bm u\right\|_{s}),\label{inter-u}\\
	&\left\|\nabla\times(\bm u-\Pi_h\bm u)\right\|\leq Ch^s\left\|\nabla\times\bm u\right\|_{s},\label{inter-curlu}\\
	&	\left\|(\nabla\times)^2(\bm u-\Pi_h\bm u)\right\|\leq Ch^{s-1}\left\|\nabla\times\bm u\right\|_{s}.
	\end{align}
\end{theorem}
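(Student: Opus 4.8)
The last two bounds are immediate from Lemma~\ref{commute}. By the commuting relation \eqref{Pih_and_tildepih}, $\nabla\times(\bm u-\Pi_h\bm u)=\nabla\times\bm u-\tilde\pi_h(\nabla\times\bm u)$, and since $P_{k-1}(K)\subseteq W_h^{k-1}(K)$ on every $K$, classical Lagrange interpolation estimates give $\|\nabla\times\bm u-\tilde\pi_h(\nabla\times\bm u)\|\le Ch^{s}\|\nabla\times\bm u\|_{s}$ and $|\nabla\times\bm u-\tilde\pi_h(\nabla\times\bm u)|_{1}\le Ch^{s-1}\|\nabla\times\bm u\|_{s}$ for $1+\delta\le s\le k$; since $|(\nabla\times)^{2}\bm v|=|\nabla(\nabla\times\bm v)|$ pointwise in two dimensions, the second of these is exactly $\|(\nabla\times)^2(\bm u-\Pi_h\bm u)\|\le Ch^{s-1}\|\nabla\times\bm u\|_s$. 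This proves \eqref{inter-curlu} and the third estimate, so only \eqref{inter-u} remains.

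For \eqref{inter-u} I would argue elementwise, using $\|\bm u-\Pi_h\bm u\|^2=\sum_{K}\|\bm u-\Pi_K\bm u\|_{0,K}^{2}$, and on each $K$ pick a good approximant \emph{inside the shape-function space} $V_h^{k-2,k}(K)$, so that $\Pi_K$ reproduces it. Write $\phi:=\nabla\times\bm u$ and, using the direct-sum structure \eqref{Vh2} together with the null-homotopy identity \eqref{null-homotopy}, decompose $\bm u|_{K}=\nabla w+\mathfrak p_{K}\phi$, where $\mathfrak p_{K}$ denotes the Poincar\'e operator based at a vertex of $K$ (the one used to build $V_h^{k-2,k}(K)$; it integrates inside $K$ and satisfies $\|\mathfrak p_{K}g\|_{0,K}\le Ch_K\|g\|_{0,K}$ with $C$ uniform over shape-regular meshes), and $w$ is a scalar potential of the curl-free remainder with $|w|_{s,K}\le C(\|\bm u\|_{s-1,K}+\|\phi\|_{s-1,K})$. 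Choose $q\in\Sigma_h^{k-1}(K)$ and $\rho\in W_h^{k-1}(K)$ best-approximating $w$ and $\phi$ respectively, and set $\bm p:=\nabla q+\mathfrak p_{K}\rho\in V_h^{k-2,k}(K)$; then $\nabla\times\bm p=\rho$, $\Pi_K\bm p=\bm p$, and $\bm u-\Pi_K\bm u=(\bm u-\bm p)-\Pi_K(\bm u-\bm p)$.

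It then remains to bound the two summands. The first is pure approximation theory: $\bm u-\bm p=\nabla(w-q)+\mathfrak p_{K}(\phi-\rho)$, so $\|\bm u-\bm p\|_{0,K}\le|w-q|_{1,K}+Ch_K\|\phi-\rho\|_{0,K}\le Ch_K^{s-1}(\|\bm u\|_{s-1,K}+\|\phi\|_{s,K})$ for $1+\delta\le s\le k$ (using $P_{k-1}\subseteq W_h^{k-1}$ again). For the second I would map $K$ to a reference element by the covariant (tangential) Piola transform, under which the functionals \eqref{tridef1-1}--\eqref{tridef1-3} and the operator $\Pi_K$ transform naturally, and invoke Lemma~\ref{well-defined-conditions}: the reference interpolation is bounded on $\{\hat{\bm v}:\ \hat{\bm v}\in\bm H^{1/2+\delta},\ \nabla\times\hat{\bm v}\in H^{1+\delta}\}$. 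Tracking the affine scaling one finds that $\|\Pi_K(\bm u-\bm p)\|_{0,K}$ is controlled by $\|\bm u-\bm p\|_{0,K}$, a fractional seminorm $h_K^{1/2+\delta}|\bm u-\bm p|_{1/2+\delta,K}$ (estimated via a trace inequality and the bound just obtained), and the $h_K$-weighted quantities $h_K\|\phi-\rho\|_{0,K}$ and $h_K^{2+\delta}|\phi-\rho|_{1+\delta,K}$; since $\rho$ is a best polynomial approximation of $\phi\in H^{s}$ with $s\ge 1+\delta$, each of these is $O\!\bigl(h_K^{s-1}(\|\bm u\|_{s-1,K}+\|\phi\|_{s,K})\bigr)$. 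Summing over $K$ and using $h_K\le h$ yields \eqref{inter-u}. The lowest-order cases $k=2,3$ are handled identically with $\widetilde{\mathfrak p}$ in place of $\mathfrak p$ and the bubble-enriched $W_h^{k-1}$; there the interior DOF set is empty and the argument shortens.

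The main obstacle is the asymmetric regularity hypothesis. Because the degrees of freedom of $V_h^{k-2,k}$ sample $\nabla\times\bm u$ (point values at the vertices, moments on the edges), a generic field in $\bm H^{s-1}$ does not make $\nabla\times(\bm u-\bm p)$ belong to $H^{1+\delta}$ once $s-1<2+\delta$, so one cannot simply approximate $\bm u$ by an element of $\bm P_{k-2}(K)$ and estimate $\Pi_K$ of the remainder; and even when it does belong, using $\bm P_{k-2}$-reproduction alone (Lemma~\ref{Vh}) turns out to be one order too weak at the top order $s=k$. Both issues are resolved by splitting off the potential of the curl and approximating the potential and the curl separately inside $V_h^{k-2,k}(K)$, so that $\nabla\times$ of the error is $\phi$ minus a genuine good approximation of $\phi$. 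After that the proof is the routine affine-scaling/Bramble--Hilbert bookkeeping; the only mildly technical points are the fractional trace inequality for the tangential moment functionals, the bound $\|\mathfrak p_K g\|_{0,K}\le Ch_K\|g\|_{0,K}$, and keeping track of the extra factor $h_K$ that the curl-type functionals pick up under the scaling, which only helps.
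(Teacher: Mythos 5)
Your treatment of \eqref{inter-curlu} and of the third estimate is correct and is exactly what the paper intends: by Lemma \ref{commute}, $\nabla\times(\bm u-\Pi_h\bm u)=(I-\tilde\pi_h)(\nabla\times\bm u)$, and since $P_{k-1}(K)\subseteq W_h^{k-1}(K)$ the standard Lagrange estimates give the rates $h^{s}$ in $L^2$ and $h^{s-1}$ for the first derivatives, the latter being $\|(\nabla\times)^2(\bm u-\Pi_h\bm u)\|$. The paper's own proof is only a pointer to Lemma \ref{Vh}, Lemma \ref{commute} and Theorem 3.11 of \cite{WZZelement}, so here you are simply more explicit than the original.

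For \eqref{inter-u} there are two problems with your route. First, the approximant $\bm p=\nabla q+\mathfrak p_K\rho$ need not lie in $V_h^{k-2,k}(K)$: by \eqref{Vh2} the $r=k-1$ space is $\nabla\Sigma_h^{k-1}(K)\oplus\widetilde{\mathfrak p}\,W_h^{k-1}(K)$ with the \emph{modified} Poincar\'e operator for every $k$, not only $k=2,3$ (this is forced by conformity: the edge DOFs in \eqref{tridef1-2} only control tangential traces of degree $k-2$, whereas $\mathfrak p\rho$ has tangential trace of degree up to $k$). Hence $\Pi_K\bm p=\bm p$ is not available as written. If you replace $\mathfrak p_K\rho$ by $\widetilde{\mathfrak p}_K\rho$, the error acquires the extra gradient $\nabla\varphi_\rho$, which is not small; and if you instead decompose $\bm u$ itself using $\widetilde{\mathfrak p}_K$, you must extend $\widetilde{\mathfrak p}$ beyond polynomials and prove a bound of the form $\|\widetilde{\mathfrak p}_K g\|_{0,K}\le Ch_K\|g\|_{0,K}$, which does not follow from the corresponding bound for $\mathfrak p_K$ because the correction $\varphi_g$ is built from edge traces of $\mathfrak p_K g$. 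Second, the ``obstacle'' that motivates your detour is not actually there. Take $\bm p$ to be the degree-$(k-2)$ averaged Taylor polynomial of $\bm u$, which lies in $\bm P_{k-2}(K)\subseteq V_h^{k-2,k}(K)$ by Lemma \ref{Vh} and satisfies $\nabla\times\bm p=T^{k-3}(\nabla\times\bm u)$. Then in the scaled stability bound you yourself write down, the curl-type terms give $h_K\|\nabla\times(\bm u-\bm p)\|_{0,K}\le Ch_K\cdot h_K^{\min(s,k-2)}|\nabla\times\bm u|_{\min(s,k-2),K}$, which at $s=k$ equals $Ch_K^{k-1}=Ch_K^{s-1}$ — exactly the claimed rate, not one order short; for $s\le k-2$ it is even better. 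This simpler route (reproduction of $\bm P_{k-2}$, covariant scaling, Bramble--Hilbert) is what ``a similar proof of Theorem 3.11 in \cite{WZZelement}'' refers to, and it bypasses the $\mathfrak p$ versus $\widetilde{\mathfrak p}$ issue entirely. (A caveat shared by your write-up and the paper: for $1+\delta\le s<\tfrac32+\delta$ the hypothesis $\bm u\in\bm H^{s-1}$ does not guarantee the regularity $\bm u\in\bm H^{1/2+\delta}$ required by Lemma \ref{well-defined-conditions}, so the interpolant is only defined for $s$ large enough.)
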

\begin{proof}
From Lemma \ref{Vh}, $\bm P_{k-2}(K)\subseteq V^{k-2, k}_h(K)$ and  $\bm P_{k-1}(K)\subseteq W^{k-1}_h(K)$. By a similar proof of Theorem 3.11 in \cite{WZZelement} and using Lemma \eqref{commute}, we complete the proof. 
\end{proof}
\begin{remark}
	Here, we only provide the approximation property for the interpolation $\Pi_h\bm u$. Since $V_h^{k-2,k}$ is a conforming finite element space, the approximation property of the numerical solution $\bm u_h$ follows immediately from C\'ea's lemma. It is the same for the other two families.
\end{remark}

\begin{remark}\label{rmk:rectangular}
Similarly, we can get a family of rectangular elements.
The DOFs for $\bm{u}\in V^{k-2, k}_{h}(K)=\nabla Q_{k-1}(K)+\widetilde{\mathfrak p}W^{k-1}_h(K)$ are given by the following.
\begin{itemize}
		\item Vertex DOFs $\bm M_{ {v}}({\bm u})$ at all the vertices $ {v}_{i}$ of $K$:
	\begin{equation*}
	\bm M_{ {v}}({\bm u})=\left\{( \nabla\times {\bm u})(  v_{i}),\; i=1,\;2,\cdots,4\right\}.
	\end{equation*}
	\item Edge DOFs $\bm M_{ {e}}(  {\bm u})$ at all the edges $ {e}_i$ of $ {K}$ (with the unit  tangential vector $ {\bm \tau}_i$):
	\begin{align*}
		 \bm M_{ {e}}(  {\bm u})=&\left\{\int_{e_i} {\bm u}\cdot  {\bm \tau}_i  {q}\mathrm d {s},\ \forall  {q}\in P_{k-2}( {e}_i), i=1,2,\cdots,4\right\}\\
		 \cup&	\left\{\int_{e_i}\nabla\times{\bm u}q\d s,\ \forall  {q}\in P_{k-3}( {e}_i), i=1,2,\cdots,4\right\}.
	\end{align*}	
    \item Interior DOFs $\bm M_{ {K}}(  {\bm u})$:
	\begin{align*}
	&\bm M_{ {K}}(  {\bm u})=\left\{\int_{ {K}}  {\bm u}\cdot  {\bm q}\mathrm \d A,\ \forall {\bm q} \in  \mathcal{G}_1\oplus \mathcal{G}_2 \right\},
	\end{align*}
	where $\mathcal{G}_1=\big\{ {\bm q}\ |\  {\bm q}=  \psi {{\bm x}},\ \forall   \psi\in Q_{k-3}(K)\big\}\ \text{and}\ \mathcal{G}_2=\big\{ {\bm q}\ |\   {\bm q}= \bm{\nabla}\times {\varphi},\ \forall  {\varphi}\in  {Q}_{k-3}(K)\slash{\mathbb{R}}\big\}$
	 when $k\geq 3$; $\mathcal{G}_1=\mathcal{G}_2=\emptyset$ when $k=2$.
\end{itemize}
The same theoretical results as the triangular elements can be obtained by a similar argument. 
\end{remark}

\section{Two families of  curl-curl conforming elements with $r=k$ and $r=k+1$}\label{sec:existing}
The curl-curl conforming elements introduced in \cite{WZZelement,quad-curl-eig-posterior} are restricted to high-order cases, i.e., $k\geq 4$ for triangular elements and  $k\geq 3$ for rectangular elements in \cite{WZZelement}, and $k\geq 4$ for the triangular elements in \cite{quad-curl-eig-posterior}. Rectangular elements are missing in \cite{quad-curl-eig-posterior}.

% However, for the existing families, there are some polynomial degree requirements: $k\geq 4$ for a triangular element and  $k\geq 3$ for a rectangular element.  
In this section,  we will construct two families of curl-curl conforming elements by setting $r=k$ and $r=k+1$ with $k\geq 2$. The two families of elements contain the elements in \cite{WZZelement,quad-curl-eig-posterior}. Similar properties as in \cite{WZZelement,quad-curl-eig-posterior} hold for the generalizations below. For brevity, we only present the definitions and the approximation properties of the $V_h$ spaces.

% In this article, we refer to the family of elements in \cite{WZZelement} as the first family and that in \cite{quad-curl-eig-posterior} as the second family.
%Some properties of the two families have been demonstrated in \cite{WZZelement,quad-curl-eig-posterior}; some of them are the same as the new family. Hence, in this section, we only present the definitions and the approximation properties of $V_h$ and omit the other properties. 
\subsection{A family of the curl-curl conforming elements with $r=k$}
By taking $r=k$, we obtain another family of finite element complexes, i.e.,
\begin{equation}\label{discrete-complex-k}
\begin{tikzcd}
0 \arrow{r} &\mathbb{R} \arrow{r}{\subset} & \Sigma_h^{k}  \arrow{r}{\nabla} & V_{h}^{k-1, k}  \arrow{r}{\nabla\times} & W_h^{k-1} \arrow{r}&0.
 \end{tikzcd}
\end{equation}
Recall that $\Sigma_h^{k}$ is the Lagrange finite element space of order $k$, and $V^{k-1, k}_h(K)=\nabla \Sigma_h^{k}(K)\oplus \mathfrak p W_h^{k-1}(K)$ with $k\geq 4$ or $V^{k-1, k}_h(K)=\nabla \Sigma_h^{k}(K)\oplus \widetilde{\mathfrak p} W_h^{k-1}(K)$ with $k=2,3$.
By Lemma \ref{Vh}, $V^{k-1, k}_h(K)$ contains $\bm P_{k-1}(K)$. More precisely, \begin{align*}
 	V^{k-1, k}_h(K)=&\mathcal{R}_k\triangleq\bm P_{k-1}\oplus \big\{\bm u\in\widetilde {\bm P}_k\big|\ \bm u\cdot \bm x =0\big\} \text{ when } k\geq 4 \text{ and } K \text{ is a triangle},\\
 	&V^{k-1, k}_h(K)=Q_{k-1,k}\times Q_{k,k-1} \text{ when } k\geq 3 \text{ and } K \text{ is a rectangle},
 \end{align*}
 which can be proved by a similar argument as for Lemma \ref{Vh}. 

For the triangular elements with $k\geq 4$ (rectangular elements with $k\geq 3$),  $V^{k-1, k}_h$ coincides with the curl-curl conforming elements in \cite{WZZelement}.  Here we extend these finite elements to lower-order by allowing $k=2$ or $3$.   The sequence of the lowest-order case is shown in Fig. \ref{fig:firstfamily}. These elements have 9 DOFs on a triangle and 13 DOFs on a rectangle.
  
% Note that they are exactly the shape function spaces used for constructing the first family of curl-curl conforming elements in \cite{WZZelement}. By giving the same DOFs, we can reconstruct this family. However, the polynomial degree requirements will be relaxed to $k\geq 2$. In other words, we extend the first family of curl-curl conforming elements to $k\geq 2$. The sequence of lowest-order case is shown in Fig. \ref{fig:firstfamily}. The lowest-order element has 9 DOFs for a triangle and 13 DOFs for a rectangle.

\subsubsection{Triangular elements} We define the following DOFs for $V^{k-1, k}_h(K)=\nabla P_k(K)\oplus \mathfrak p W^{k-1}_h(K)$.
%\begin{itemize}
%\item
%$\nabla\times\bm u$ at each vertex,
%\item tangential moments on each edge $e$:
%$$
%\int_{e}\bm{u}\cdot \bm{\tau}_{e}\, dx.
%$$
%\end{itemize}

\begin{itemize}
		\item Vertex DOFs $\bm M_{ {v}}({\bm u})$ at all the vertices $ {v}_{i}$ of $K$:
	\begin{equation*}
	\bm M_{ {v}}({\bm u})=\left\{( \nabla\times {\bm u})(  v_{i}),\; i=1,\;2,3\right\}.
	\end{equation*}
	\item Edge DOFs $\bm M_{ {e}}(  {\bm u})$ at all the edges $ {e}_i$ of $ {K}$ (with the unit  tangential vector $ {\bm \tau}_i$):
	\begin{align*}
		 \bm M_{ {e}}(  {\bm u})=&\left\{\int_{e_i} {\bm u}\cdot  {\bm \tau}_i  {q}\mathrm d {s},\ \forall  {q}\in P_{k-1}( {e}_i), i=1,2,3\right\}\\
		 \cup&	\left\{\int_{e_i}\nabla\times{\bm u}q\d s,\ \forall  {q}\in P_{k-3}( {e}_i), i=1,2,3\right\}.
	\end{align*}	
    \item Interior DOFs $\bm M_{ {K}}(  {\bm u})$:
	\begin{align*}
	&\bm M_{ {K}}(  {\bm u})=\left\{\int_{ {K}}  {\bm u}\cdot  {\bm q}\mathrm \d A,\ \forall \bm q \in  \mathcal{D} \right\},
	%\label{def5}
	%&\bm M_{ {K}}(  {\bm u})=\left\{\int_{ {K}} {\bm u}\cdot {\nabla}\times {\varphi}\mathrm \d S,\ \forall  {\varphi}\in  \tilde{Q}_{k-3}(  K)\right\},
	\end{align*}
	where $\mathcal{D}=\bm P_{k-5}( K)\oplus\widetilde{P}_{k-5} {\bm x}\oplus\widetilde{P}_{k-4} {\bm x}\oplus
	\widetilde{P}_{k-3} {{\bm x}}$ when $k\geq 5$; $\mathcal{D}={P}_{k-3} {\bm x}$ when $k=3,4$; $\mathcal{D}=\emptyset$ when $k=2$.
		\end{itemize}
%		Note that in the first family of triangular elements in \cite{WZZelement}, there is a requirement of $k\geq 4$. Now we have relaxed this requirement.
\begin{center} 
\begin{figure}
\setlength{\unitlength}{1.2cm}
\begin{picture}(5,6.3)(1.8,-4)
\put(0,0){
\begin{picture}(2,2)
\put(-1, 0){\line(1,2){1}} 
\put(0, 2){\line(1,-2){1}}
\put(-1,0){\line(1,0){2}}
\put(-1.,0){\circle*{0.1}}
\put(1.,0){\circle*{0.1}}
\put(0,2){\circle*{0.1}}
\put(0,0){\circle*{0.1}}
\put(0.5,1){\circle*{0.1}}
\put(-0.5,1){\circle*{0.1}}
\end{picture}
}

\put(1.5, 1){\vector(1, 0){1}}
\put(1.68, 1.15){$\nabla$}

\put(4,0){
\put(-1, 0){\line(1,2){1}} 
\put(0, 2){\line(1,-2){1}}
\put(-1,0){\line(1,0){2}}
\put(-0.16, 2.06){$\curl$}
\put(-1.14, -0.18){$\curl$}
\put(0.88, -0.18){$\curl$}
\put(-0.75, 0.65){\vector(1, 2){0.3}}
\put(-0.9, 0.35){\vector(1, 2){0.3}}
\put(0.75, 0.65){\vector(-1, 2){0.3}}
\put(0.9, 0.35){\vector(-1, 2){0.3}}
\put(-0.2, -0.05){\vector(1, 0){0.6}}
\put(-0.45, -0.05){\vector(1, 0){0.6}}
}

\put(5.5, 1){\vector(1, 0){1}}
\put(5.75, 1.1){{$\nabla\times$}}
\put(8,0){
\begin{picture}(2,2)
\put(-1, 0){\line(1,2){1}} 
\put(0, 2){\line(1,-2){1}}
\put(-1,0){\line(1,0){2}}
\put(-1, 0){\circle*{0.1}}
\put(1, 0){\circle*{0.1}}
\put(-0, 2){\circle*{0.1}}
\put(-0, 0.5){\circle*{0.1}}
\end{picture}
}

%\end{picture}
%%%%%%%%%%%%%%
%\begin{picture}(2,2)(4,3)
\put(0,-3){
\begin{picture}(2,2)
\put(-1, 0){\line(0,2){2}} 
\put(-1, 2){\line(1,0){2}}
\put(1,2){\line(0,-2){2}}
\put(-1,0){\line(2,0){2}}
\put(-1.,2){\circle*{0.1}}
\put(-1.,0){\circle*{0.1}}
\put(1,2){\circle*{0.1}}
\put(1,0){\circle*{0.1}}
\put(0,1){\circle*{0.1}}
\put(-1.,1){\circle*{0.1}}
\put(1,1){\circle*{0.1}}
\put(0,0){\circle*{0.1}}
\put(0,2){\circle*{0.1}}
\end{picture}
}

\put(1.5, -2){\vector(1, 0){1}}
\put(1.68, -1.8){$\nabla$}

\put(4,-3){
\put(-1, 0){\line(0,2){2}} 
\put(-1, 2){\line(1,0){2}}
\put(1,2){\line(0,-2){2}}
\put(-1,0){\line(2,0){2}}
\put(-1.2,2.06){$\curl$}
\put(-1.2,-0.2){$\curl$}
\put(0.8,2.06){$\curl$}
\put(0.8,-0.2){$\curl$}
\put(-0.1,0.9){$\times$}
\put(-0.2, 2.06){\vector(1, 0){0.6}}
\put(1.06, 0.65){\vector(0, 1){0.6}}
\put(-1.06, 0.65){\vector(0, 1){0.6}}
\put(-0.2, -0.05){\vector(1, 0){0.6}}
\put(-0.5, 2.06){\vector(1, 0){0.6}}
\put(1.06, 0.25){\vector(0, 1){0.6}}
\put(-1.06, 0.25){\vector(0, 1){0.6}}
\put(-0.5, -0.05){\vector(1, 0){0.6}}
}

\put(5.5, -2){\vector(1, 0){1}}
\put(5.75, -1.8){{$\nabla\times$}}
\put(8,-3){
\begin{picture}(2,2)
\put(-1, 0){\line(0,2){2}} 
\put(-1, 2){\line(1,0){2}}
\put(1,2){\line(0,-2){2}}
\put(-1,0){\line(2,0){2}}
\put(-1.,2){\circle*{0.1}}
\put(-1.,0){\circle*{0.1}}
\put(1,2){\circle*{0.1}}
\put(1,0){\circle*{0.1}}
\put(-0, 1){\circle*{0.1}}
\end{picture}}

\put(0,-4){$\Sigma_h^2$}
\put(1.5, -4){\vector(1, 0){1}}
\put(1.68, -3.8){$\nabla$}
\put(4,-4){$V_h^{1,2}$}
\put(5.5, -4){\vector(1, 0){1}}
\put(5.75, -3.8){{$\nabla\times$}}
\put(8,-4){$W_h^1$}
\end{picture}

\caption{The lowest-order ($k=2$) finite element complex \eqref{discrete-complex-k} in 2D.}\label{fig:firstfamily}
\end{figure}
\end{center}
\subsubsection{Rectangular elements}
Similarly, we can extend the rectangular elements to the case of $k=2$.
The DOFs for $\bm{u}\in V^{k-1, k}_h(K)=\nabla Q_k(K)\oplus \mathfrak p W^{k-1}_h(K)$ or $\nabla Q_k(K)\oplus \widetilde{\mathfrak p}W^{k-1}_h(K)$ are given by the following.
\begin{itemize}
		\item Vertex DOFs $\bm M_{ {v}}({\bm u})$ at all the vertices $ {v}_{i}$ of $K$:
	\begin{equation*}
	\bm M_{ {v}}({\bm u})=\left\{( \nabla\times {\bm u})(  v_{i}),\; i=1,\;2,\cdots,4\right\}.
	\end{equation*}
	\item Edge DOFs $\bm M_{ {e}}(  {\bm u})$ at all the edges $ {e}_i$ of $ {K}$, each with the unit  tangential vector $ {\bm \tau}_i$:
	\begin{align*}
		 \bm M_{ {e}}(  {\bm u})=&\left\{\int_{e_i} {\bm u}\cdot  {\bm \tau}_i  {q}\mathrm d {s},\ \forall  {q}\in P_{k-1}( {e}_i), i=1,2,\cdots,4\right\}\\
		 \cup&	\left\{\int_{e_i}\nabla\times{\bm u}q\d s,\ \forall  {q}\in P_{k-3}( {e}_i), i=1,2,\cdots,4\right\}.
	\end{align*}	
    \item Interior DOFs $\bm M_{ {K}}(  {\bm u})$:
	\begin{align*}
	&\bm M_{ {K}}(  {\bm u})=\left\{\int_{ {K}}  {\bm u}\cdot  {\bm q}\mathrm \d A,\ \forall \bm q \in  \mathcal{G}_1\oplus \mathcal{G}_2 \right\},
	%\label{def5}
	%&\bm M_{ {K}}(  {\bm u})=\left\{\int_{ {K}} {\bm u}\cdot {\nabla}\times {\varphi}\mathrm \d S,\ \forall  {\varphi}\in  \tilde{Q}_{k-3}(  K)\right\},
	\end{align*}
	where $\mathcal{G}_1=\big\{ {\bm q}\ |\  {\bm q}=  \psi {\bm x},\ \forall   \psi\in Q_{k-2}( K)\big\}\ \text{and}\ \mathcal{G}_2=\big\{ {\bm q}\ |\   {\bm q}= \bm\nabla\times {\varphi},\ \forall  {\varphi}\in  {Q}_{k-3}( K)\slash{\mathbb{R}}\big\}$
	when $k\geq 3$; $\mathcal{G}_1=\{{\bm x}\}$ and $\mathcal{G}_2=\emptyset$ when $k=2$.
	\end{itemize}

\begin{theorem}
If $\bm u\in \bm H^s(\Omega)$ and $\nabla\times\bm u\in  H^s(\Omega)$, $1+\delta\leq s\leq k$ with $\delta>0$, then we have the following error estimates for the interpolation $\Pi_h$,
	\begin{align}
	&\left\|\bm u-\Pi_h\bm u\right\|\leq Ch^{s}(\left\|\bm u\right\|_{s}+\left\|\nabla\times\bm u\right\|_{s}),\\
	&\left\|\nabla\times(\bm u-\Pi_h\bm u)\right\|\leq Ch^s\left\|\nabla\times\bm u\right\|_{s},\\
	&	\left\|(\nabla\times)^2(\bm u-\Pi_h\bm u)\right\|\leq Ch^{s-1}\left\|\nabla\times\bm u\right\|_{s}.
	\end{align}
\end{theorem}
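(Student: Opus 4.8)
The plan is to follow the same strategy used for the $r=k-1$ family in Section \ref{sec:new}, namely to reduce all three estimates to the single interpolation error bound on $\nabla\times\bm u$ together with the commuting diagram. First I would record the two containments coming from Lemma \ref{Vh}: since here $r=k$ and $r\le k+1$, we have $\bm P_{k-1}(K)\subseteq V^{k-1,k}_h(K)$, and moreover $P_{k-1}(K)\subseteq W^{k-1}_h(K)$ (indeed equality in the relevant degree range). These give local polynomial approximation of order $k$ for $V_h$ and order $k$ for $W_h$ via the Bramble--Hilbert lemma applied on the reference element and scaling.

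The key step is to set up the commuting diagram analogous to \eqref{2complex} for the complex \eqref{discrete-complex-k}, with interpolations $\pi_h:W\to\Sigma^k_h$, $\Pi_h:V\to V^{k-1,k}_h$, and $\tilde\pi_h:W\to W^{k-1}_h$, and to verify $\nabla\times\Pi_h\bm u=\tilde\pi_h\nabla\times\bm u$ exactly as in Lemma \ref{commute}; the proof there only uses the structure of the DOFs (the $\nabla\times\bm u$ being matched at vertices, along edges against $P_{k-3}$, and the $\bm u\cdot\bm\tau$ against $P_{k-1}$ on edges), all of which is present in the $r=k$ DOF set verbatim. With the commuting property in hand, the second estimate follows from $\nabla\times(\bm u-\Pi_h\bm u)=(I-\tilde\pi_h)\nabla\times\bm u$ and the standard Lagrange-with-bubble interpolation error bound of order $k$ in $L^2$ for $\tilde\pi_h$, giving $\|\nabla\times(\bm u-\Pi_h\bm u)\|\le Ch^s\|\nabla\times\bm u\|_s$. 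The third estimate follows by applying the $H^1$ Lagrange estimate one order lower, $\|(\nabla\times)^2(\bm u-\Pi_h\bm u)\|=\|\nabla(\nabla\times\bm u-\tilde\pi_h\nabla\times\bm u)\|\le Ch^{s-1}\|\nabla\times\bm u\|_s$.

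For the first estimate I would split $\bm u-\Pi_h\bm u=(\bm u-\bm P_h\bm u)+\bm P_h\bm u-\Pi_h\bm u$ where $\bm P_h$ is a local $L^2$- or elementwise-$\bm P_{k-1}$ best approximation, so that the first piece is $O(h^s\|\bm u\|_s)$ by Bramble--Hilbert (note $\bm H^s$ regularity of $\bm u$ is assumed here, one order better than in the $r=k-1$ theorem, which is why the exponent improves to $h^s$). For the second piece, since $\Pi_h$ is a projection onto $V^{k-1,k}_h\supseteq\bm P_{k-1}$, one has $\bm P_h\bm u-\Pi_h\bm u=\Pi_h(\bm P_h\bm u-\bm u)$, and then one bounds $\|\Pi_K\bm w\|_K$ on each element in terms of the DOF values of $\bm w$ via the usual scaling/equivalence-of-norms argument on the reference element; the DOFs involve $\bm w\cdot\bm\tau$ on edges, $\nabla\times\bm w$ at vertices and on edges, and interior moments, so a trace-inequality bookkeeping gives $\|\Pi_K\bm w\|_K\le C(\|\bm w\|_K+h_K\|\nabla\times\bm w\|_K+\dots)$, and feeding in $\bm w=\bm P_h\bm u-\bm u$ plus the curl error estimate already proved yields the stated bound $Ch^s(\|\bm u\|_s+\|\nabla\times\bm u\|_s)$. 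The analogous reasoning covers the rectangular elements, with $\bm P_{k-1}\subseteq Q_{k-1,k}\times Q_{k,k-1}$ and $Q_{k-1}\subseteq W^{k-1}_h$ replacing the triangular containments.

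The main obstacle is the bookkeeping in the stability bound $\|\Pi_K\bm w\|_K\lesssim$ (scaled DOF values of $\bm w$): one must track the correct powers of $h_K$ attached to the edge-tangential moments, the vertex and edge values of $\nabla\times\bm w$, and the interior moments, so that after summing over $K$ and invoking the trace and inverse inequalities the $\nabla\times$-part contributes exactly $h^s\|\nabla\times\bm u\|_s$ and not a worse power; this is precisely the content of the proof of Theorem 3.11 in \cite{WZZelement}, which we invoke, the only new point being that the lower-order cases $k=2,3$ are now admissible because $V^{k-1,k}_h(K)$ still contains $\bm P_{k-1}(K)$ by Lemma \ref{Vh} and the bubble-enriched $W^{k-1}_h$ still contains $P_{k-1}$ (resp. $Q_{k-1}$).
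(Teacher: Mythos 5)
Your proposal is correct and follows essentially the same route as the paper: the paper's proof consists of exactly the two polynomial containments $\bm P_{k-1}(K)\subseteq V^{k-1,k}_h(K)$ and $P_{k-1}(K)\subseteq W_h^{k-1}(K)$ from Lemma \ref{Vh}, with the rest delegated (as in the $r=k-1$ case) to the commuting-diagram lemma and the argument of Theorem 3.11 in \cite{WZZelement}, which is precisely the machinery you spell out.
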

\begin{proof}
From Lemma \ref{Vh}, $\bm P_{k-1}(K)\subseteq V^{k-1, k}_h(K)$ and  $\bm P_{k-1}(K)\subseteq W^{k-1}_h(K)$. \end{proof}

\subsection{A family of the curl-curl conforming elements with $r=k+1$}

We take $r=k+1$ in \eqref{local-complex} for $k\geq 2$ to get the following complex:
\begin{equation}\label{discrete-complex-k1}
\begin{tikzcd}
0 \arrow{r} &\mathbb{R} \arrow{r}{\subset} & \Sigma_h^{k+1} \arrow{r}{\nabla} &V_{h}^{k, k}   \arrow{r}{\nabla\times} & W_h^{k-1} \arrow{r}&0.
 \end{tikzcd}
\end{equation}

 We note that $V^{k, k}_h(K)=\bm P_k(K)$ when $k\geq 4$ and $K$ is a triangle, and thus $V_h^{k, k}(K)$ on triangles coincides with the finite elements constructed in \cite{quad-curl-eig-posterior} for $k\geq 4$. The lower-order triangular elements and the entire family of rectangular elements fill the gap in \cite{quad-curl-eig-posterior}.
 % However, lower order triangular elements and the entire family of rectangular elements are missing in  \cite{quad-curl-eig-posterior}. By using the new ideas in this paper, we extend the construction in \cite{quad-curl-eig-posterior} and fill the gap.

The lowest-order cases are shown in Fig. \ref{fig:secondfamily}. The number of DOFs of the lowest-order element is 13 for a triangle and 20 for a rectangle.

%If we take $\Sigma_h$ as the Lagrange finite element space of order $k+1$, we can derive another family of finite elements whose shape function spaces are $V_h(K)=\nabla P_{k+1}(K)\oplus \mathfrak p W_h(K)$ or $V_h(K)=\nabla Q_{k+1}(K)\oplus \mathfrak p W_h(K)$ with $k\geq 2$. Actually, $V_h(K)=\bm P_k(K)$ when $k\geq 4$ and $K$ is a triangle. This  coincides with  the shape function space used in the second family \cite{quad-curl-eig-posterior}. Note that this family has the requirement $k\geq 4$ and misses the rectangular elements. In this section, we will extend the second family of triangular elements and fill the gap of the rectangular elements. 
\begin{center} 
\begin{figure}
\setlength{\unitlength}{1.2cm}
\begin{picture}(5,6.3)(1.8,-4)
\put(0,0){
\begin{picture}(2,2)
\put(-1, 0){\line(1,2){1}} 
\put(0, 2){\line(1,-2){1}}
\put(-1,0){\line(1,0){2}}
\put(-1.,0){\circle*{0.1}}
\put(1.,0){\circle*{0.1}}
\put(0,2){\circle*{0.1}}
\put(-0.3,0){\circle*{0.1}}
\put(0.3,0){\circle*{0.1}}
\put(-0.3,1.36){\circle*{0.1}}
\put(-0.66,0.67){\circle*{0.1}}
\put(0.3,1.36){\circle*{0.1}}
\put(0.66,0.67){\circle*{0.1}}
\put(0,0.67){\circle*{0.1}}
\end{picture}
}

\put(1.5, 1){\vector(1, 0){1}}
\put(1.68, 1.15){$\nabla$}

\put(4,0){
\put(-1, 0){\line(1,2){1}} 
\put(0, 2){\line(1,-2){1}}
\put(-1,0){\line(1,0){2}}
\put(-0.16, 2.06){$\curl$}
\put(-1.14, -0.18){$\curl$}
\put(0.88, -0.18){$\curl$}
\put(-0.75, 0.65){\vector(1, 2){0.3}}
\put(-0.9, 0.35){\vector(1, 2){0.3}}
\put(-0.62, 0.9){\vector(1, 2){0.3}}
\put(0.75, 0.65){\vector(-1, 2){0.3}}
\put(0.9, 0.35){\vector(-1, 2){0.3}}
\put(0.62, 0.9){\vector(-1, 2){0.3}}
\put(-0.2, -0.05){\vector(1, 0){0.6}}
\put(-0.45, -0.05){\vector(1, 0){0.6}}
\put(-0.62, -0.05){\vector(1, 0){0.5}}
\put(-0.13,0.66){$\times$}
}

\put(5.5, 1){\vector(1, 0){1}}
\put(5.75, 1.1){{$\nabla\times$}}
\put(8,0){
\begin{picture}(2,2)
\put(-1, 0){\line(1,2){1}} 
\put(0, 2){\line(1,-2){1}}
\put(-1,0){\line(1,0){2}}
\put(-1, 0){\circle*{0.1}}
\put(1, 0){\circle*{0.1}}
\put(-0, 2){\circle*{0.1}}
\put(-0, 0.5){\circle*{0.1}}
\end{picture}
}

%\end{picture}
%%%%%%%%%%%%%%
%\begin{picture}(2,2)(4,3)
\put(0,-3){
\begin{picture}(2,2)
\put(-1, 0){\line(0,2){2}} 
\put(-1, 2){\line(1,0){2}}
\put(1,2){\line(0,-2){2}}
\put(-1,0){\line(2,0){2}}
\put(-1.,2){\circle*{0.1}}
\put(-1.,0){\circle*{0.1}}
\put(1,2){\circle*{0.1}}
\put(1,0){\circle*{0.1}}
\put(-1.,1.35){\circle*{0.1}}
\put(-1.,0.66){\circle*{0.1}}
\put(-0.35,0.66){\circle*{0.1}}
\put(0.35,0.66){\circle*{0.1}}
\put(-0.35,1.35){\circle*{0.1}}
\put(0.35,1.35){\circle*{0.1}}
\put(1,1.35){\circle*{0.1}}
\put(1,0.66){\circle*{0.1}}
\put(-0.35,0){\circle*{0.1}}
\put(0.35,0){\circle*{0.1}}
\put(0.35,2){\circle*{0.1}}
\put(-0.35,2){\circle*{0.1}}
\end{picture}
}

\put(1.5, -2){\vector(1, 0){1}}
\put(1.68, -1.8){$\nabla$}

\put(4,-3){
\put(-1, 0){\line(0,2){2}} 
\put(-1, 2){\line(1,0){2}}
\put(1,2){\line(0,-2){2}}
\put(-1,0){\line(2,0){2}}
\put(-1.2,2.06){$\curl$}
\put(-1.2,-0.2){$\curl$}
\put(0.8,2.06){$\curl$}
\put(0.8,-0.2){$\curl$}
\put(0.23,0.66){$\times$}
\put(-0.4,0.66){$\times$}
\put(0.23,1.35){$\times$}
\put(-0.4,1.35){$\times$}
\put(-0.2, 2.06){\vector(1, 0){0.6}}
\put(1.06, 0.75){\vector(0, 1){0.6}}
\put(-1.06, 0.75){\vector(0, 1){0.6}}
\put(-0.2, -0.05){\vector(1, 0){0.6}}
\put(-0.5, 2.06){\vector(1, 0){0.6}}
\put(1.06, 0.35){\vector(0, 1){0.6}}
\put(-1.06, 0.35){\vector(0, 1){0.6}}
\put(-0.5, -0.05){\vector(1, 0){0.6}}
\put(-0.8, 2.06){\vector(1, 0){0.6}}
\put(1.06, -0){\vector(0, 1){0.6}}
\put(-1.06, -0){\vector(0, 1){0.6}}
\put(-0.5, -0.05){\vector(1, 0){0.6}}
\put(-0.8, -0.05){\vector(1, 0){0.6}}
}

\put(5.5, -2){\vector(1, 0){1}}
\put(5.75, -1.8){{$\nabla\times$}}
\put(8,-3){
\begin{picture}(2,2)
\put(-1, 0){\line(0,2){2}} 
\put(-1, 2){\line(1,0){2}}
\put(1,2){\line(0,-2){2}}
\put(-1,0){\line(2,0){2}}
\put(-1.,2){\circle*{0.1}}
\put(-1.,0){\circle*{0.1}}
\put(1,2){\circle*{0.1}}
\put(1,0){\circle*{0.1}}
\put(-0, 1){\circle*{0.1}}
\end{picture}
}
\put(0,-4){$\Sigma_h^3$}
\put(1.5, -4){\vector(1, 0){1}}
\put(1.68, -3.8){$\nabla$}
\put(4,-4){$V_h^{2,2}$}
\put(5.5, -4){\vector(1, 0){1}}
\put(5.75, -3.8){{$\nabla\times$}}
\put(8,-4){$W_h^1$}
\end{picture}

\caption{The lowest-order ($k=2$) finite element complex \eqref{discrete-complex-k1} in 2D. }
\label{fig:secondfamily}
\end{figure}
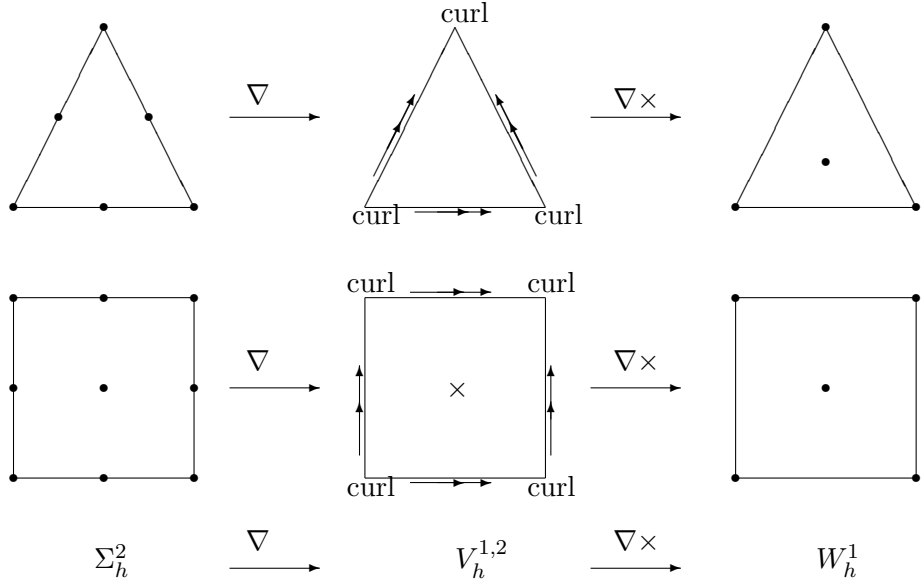
\end{center}

\subsubsection{Triangular elements}
The DOFs for $\bm{u}\in V^{k, k}_h(K)=\nabla P_{k+1}(K)\oplus \mathfrak p W^{k-1}_h(K)$ are given as follows.
%\begin{itemize}
%\item
%$\nabla\times\bm u$ at each vertex,
%\item tangential moments on each edge $e$:
%$$
%\int_{e}\bm{u}\cdot \bm{\tau}_{e}\, dx.
%$$
%\end{itemize}

\begin{itemize}
		\item Vertex DOFs $\bm M_{ {v}}({\bm u})$ at all the vertices $ {v}_{i}$ of $K$:
	\begin{equation*}
	\bm M_{ {v}}({\bm u})=\left\{( \nabla\times {\bm u})(  v_{i}),\; i=1,\;2,\;3\right\}.
	\end{equation*}
	\item Edge DOFs $\bm M_{ {e}}(  {\bm u})$ at all the edges $ {e}_i$ of $ {K}$, each with the unit  tangential vector $ {\bm \tau}_i$:
	\begin{align*}
		 \bm M_{ {e}}(  {\bm u})=&\left\{\int_{e_i} {\bm u}\cdot  {\bm \tau}_i  {q}\mathrm d {s},\ \forall  {q}\in P_{k}( {e}_i), i=1,2,3\right\}\\
		 \cup&	\left\{\int_{e_i}\nabla\times{\bm u}q\d s,\ \forall  {q}\in P_{k-3}( {e}_i), i=1,2,3\right\}.
	\end{align*}	
    \item Interior DOFs $\bm M_{ {K}}(  {\bm u})$:
	\begin{align*}
	&\bm M_{ {K}}(  {\bm u})=\left\{\int_{ {K}}  {\bm u}\cdot  {\bm q}\mathrm \d A,\ \forall \bm q \in  \mathcal{D} \right\},
	%\label{def5}
	%&\bm M_{ {K}}(  {\bm u})=\left\{\int_{ {K}} {\bm u}\cdot {\nabla}\times {\varphi}\mathrm \d S,\ \forall  {\varphi}\in  \tilde{Q}_{k-3}(  K)\right\},
	\end{align*}
	where $\mathcal{D}=\bm P_{k-5}(  K)\oplus\widetilde{P}_{k-5} {\bm x}\oplus\widetilde{P}_{k-4} {\bm x}\oplus
	\widetilde{P}_{k-3} {\bm x}\oplus
	\widetilde{P}_{k-2} {\bm x}$ when $k\geq 5$; $\mathcal{D}={P}_{k-2} {\bm x}$ when $k=2,3,4$.
		\end{itemize}

\subsubsection{Rectangular elements} We extend the construction in \cite{quad-curl-eig-posterior} to the rectangular case.
%As mentioned before, the second family of elements is only defined on triangular meshes. Now we extend it to  the rectangular case.
The DOFs for $\bm{u}\in V^{k, k}_{h}(K)=\nabla Q_{k+1}(K)\oplus \mathfrak p W^{k-1}_h(K)$ are given by the following.
\begin{itemize}
		\item Vertex DOFs $\bm M_{ {v}}({\bm u})$ at all the vertices $ {v}_{i}$ of $K$:
	\begin{equation*}
	\bm M_{ {v}}({\bm u})=\left\{( \nabla\times {\bm u})(  v_{i}),\; i=1,\;2,\cdots,4\right\}.
	\end{equation*}
	\item Edge DOFs $\bm M_{ {e}}(  {\bm u})$ at all the edges $ {e}_i$ of $ {K}$, each with the unit  tangential vector $ {\bm \tau}_i$:
	\begin{align*}
		 \bm M_{ {e}}(  {\bm u})=&\left\{\int_{e_i} {\bm u}\cdot  {\bm \tau}_i  {q}\mathrm d {s},\ \forall  {q}\in P_{k}( {e}_i), i=1,2,\cdots,4\right\}\\
		 \cup&	\left\{\int_{e_i}\nabla\times{\bm u}q\d s,\ \forall  {q}\in P_{k-3}( {e}_i), i=1,2,\cdots,4\right\}.
	\end{align*}	
    \item Interior DOFs $\bm M_{ {K}}(  {\bm u})$:
	\begin{align*}
	&\bm M_{ {K}}(  {\bm u})=\left\{\int_{ {K}}  {\bm u}\cdot  {\bm q}\mathrm \d A,\ \forall \bm q \in  \mathcal{G}_1\oplus \mathcal{G}_2 \right\},
	%\label{def5}
	%&\bm M_{ {K}}(  {\bm u})=\left\{\int_{ {K}} {\bm u}\cdot {\nabla}\times {\varphi}\mathrm \d S,\ \forall  {\varphi}\in  \tilde{Q}_{k-3}(  K)\right\},
	\end{align*}
	where $\mathcal{G}_1=\big\{ {\bm q}\ |\  {\bm q}=  \psi{\bm x},\ \forall   \psi\in Q_{k-1}(  K)\big\}\ \text{and}\ \mathcal{G}_2=\big\{ {\bm q}\ |\   {\bm q}= \bm{\nabla}\times {\varphi},\ \forall  {\varphi}\in  {Q}_{k-3}( K)\slash{\mathbb{R}}\big\}$
	when $k\geq 3$; $\mathcal{G}_2=\emptyset$ when $k=2$.
	\end{itemize}
This family of elements lead to one-order higher accuracy in $L^2$-norms.
\begin{theorem}\label{interp-f2}
If $\bm u\in \bm H^{s+1}(\Omega)$, $ 1+\delta\leq s \leq k$ with $\delta>0$, then we have the following error estimates for the interpolation $\Pi_h$,
	\begin{align}
	&\left\|\bm u-\Pi_h\bm u\right\|\leq Ch^{s+1}\left\|\bm u\right\|_{s+1},\\
	&\left\|\nabla\times(\bm u-\Pi_h\bm u)\right\|\leq Ch^s\left\|\bm u\right\|_{s+1},\\
	&	\left\|(\nabla\times)^2(\bm u-\Pi_h\bm u)\right\|\leq Ch^{s-1}\left\|\bm u\right\|_{s+1}.
	\end{align}
\end{theorem}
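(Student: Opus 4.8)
The plan is to follow the strategy of Theorem 3.11 in \cite{WZZelement}, reducing everything to a Bramble--Hilbert/scaling argument combined with the polynomial inclusions already established and the commuting diagram from Lemma \ref{commute}. First I would observe that the shape function space $V^{k,k}_h(K)=\nabla P_{k+1}(K)\oplus\mathfrak p W^{k-1}_h(K)$ contains $\bm P_k(K)$: indeed, when $k\geq 4$ one has the identity $V^{k,k}_h(K)=\bm P_k(K)$ quoted after \eqref{discrete-complex-k1}, while for $k=2,3$ the inclusion $\bm P_k(K)\subseteq V^{k,k}_h(K)$ follows from Lemma \ref{Vh} applied with $r=k+1$ (so $r-1=k$); likewise $W^{k-1}_h(K)\supseteq P_{k-1}(K)$ by construction. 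These inclusions are exactly what give full local approximation order $k+1$ in $\bm L^2$ for $\bm u$ and order $k-1$ for $\nabla\times\bm u$ (since $\tilde\pi_h$ is an $H^1$-interpolation onto a space containing $P_{k-1}$).

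The key steps, in order, are: (1) on a reference element $\widehat K$, show that $\widehat\Pi_{\widehat K}$ is a bounded linear operator on $\bm H^{1/2+\delta}(\widehat K)\cap\{\bm v:\nabla\times\bm v\in H^{1+\delta}(\widehat K)\}$ — this uses Lemma \ref{well-defined-conditions} for well-definedness of the DOFs, the unisolvence lemma for existence and uniqueness, and a trace/Sobolev-embedding estimate to bound each DOF functional; (2) since $\widehat\Pi_{\widehat K}$ fixes $\bm P_k(\widehat K)$, the Bramble--Hilbert lemma yields $\|\bm u-\widehat\Pi_{\widehat K}\bm u\|_{0,\widehat K}\leq C(|\bm u|_{s,\widehat K}+|\nabla\times\bm u|_{s,\widehat K})$ for the relevant range of $s$, and similarly for the higher seminorms appearing on the right of the three estimates; (3) transform back to a general element $K$ via an affine map using the appropriate scaling of $\bm L^2$ norms and of $H^s$ seminorms under pullback, tracking the powers of $h$ that produce $h^{s+1}$, $h^s$, and $h^{s-1}$ respectively; and (4) assemble the local estimates over $\mathcal T_h$ and sum, using shape-regularity so that the constants remain $h$-independent. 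For the curl-curl estimate one additionally uses $(\nabla\times)^2(\bm u-\Pi_h\bm u)=\nabla\times(\nabla\times\bm u-\tilde\pi_h\nabla\times\bm u)$ via Lemma \ref{commute}, together with an inverse inequality on the finite element space to convert the $\bm L^2$ bound on $\nabla\times\bm u-\tilde\pi_h\nabla\times\bm u$ of order $h^s$ into an $\bm L^2$ bound of order $h^{s-1}$ on its curl; the curl-estimate itself is just the standard $H^1$-interpolation error estimate for $\tilde\pi_h$ applied to $\nabla\times\bm u$, again invoked through the commuting relation \eqref{Pih_and_tildepih}.

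The main obstacle is step (1): verifying that the edge DOFs $\int_{e_i}\bm u\cdot\bm\tau_i q\,\d s$ are bounded for $\bm u$ only in $\bm H^{1/2+\delta}$. A trace of an $\bm H^{1/2+\delta}$ function onto an edge lies in $\bm H^{\delta}(e_i)\hookrightarrow L^2(e_i)$, so these functionals are well-defined and bounded; the DOFs involving $\nabla\times\bm u$ on edges and vertices require $\nabla\times\bm u\in H^{1+\delta}$, whose point values and edge traces are controlled by Sobolev embedding in 2D. Once boundedness of the reference interpolation is in hand, everything else is the routine Bramble--Hilbert-plus-scaling machinery, identical in structure to \cite{WZZelement}; since the present $V^{k,k}_h$ contains one more order of complete polynomials than the space in the previous subsection, one simply gains one extra power of $h$ in the first estimate, giving $h^{s+1}$ in place of $h^{s}$, while the curl and curl-curl orders are unchanged because $W^{k-1}_h$ is the same space. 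Hence the proof reduces to citing Theorem 3.11 of \cite{WZZelement} with the polynomial-degree bookkeeping adjusted and Lemma \ref{commute} supplying the commuting property.
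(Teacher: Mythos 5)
Your proposal is correct and matches the paper's intent: the paper's own proof of this theorem is a one-line appeal to the polynomial inclusions $\bm P_{k}(K)\subseteq V^{k,k}_h(K)$ and $\bm P_{k-1}(K)\subseteq W^{k-1}_h(K)$ from Lemma \ref{Vh}, with the Bramble--Hilbert/scaling machinery and the commuting diagram of Lemma \ref{commute} deferred to Theorem 3.11 of \cite{WZZelement}, which is exactly what you spell out. The only minor remark is that your inverse-inequality detour for the curl-curl estimate is unnecessary: the bound follows directly from the standard $H^1$-seminorm interpolation estimate $|w-\tilde\pi_h w|_{1,K}\leq Ch^{s-1}|w|_{s,K}$ applied to $w=\nabla\times\bm u$.
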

\begin{proof}
From Lemma \ref{Vh}, $\bm P_{k}(K)\subseteq V^{k, k}_h(K)$ and  $\bm P_{k-1}(K)\subseteq W^{k-1}_h(K)$. 
\end{proof}
 
\begin{remark}
	By the duality argument, in the sense of the $L^2$-norm, the numerical solution $\bm u_h$ converges to the exact solution $\bm u$ with an order $\min\{s+1,2(s-1)\}$.  Hence when $s<3$ the convergence order is $2(s-1)$.
	\end{remark}
\section{Numerical Experiments}
In this section, we use the three families of the $H(\text{curl}^2)$-conforming finite elements  to solve the quad-curl problem: \\For $\bm  f\in H(\div^0;\Omega)$, find $\bm u$, such that
\begin{equation}\label{prob1}
\begin{split}
(\nabla\times)^4\bm u+\bm u&=\bm f\ \ \text{in}\;\Omega,\\
\nabla \cdot \bm u &= 0\ \ \text{in}\;\Omega,\\
\bm u\times\bm n&=0\ \ \text{on}\;\partial \Omega,\\
\nabla \times \bm u&=0\ \  \text{on}\;\partial \Omega.
\end{split}
\end{equation}
Here $H(\div^0;\Omega)$ is the space of $\bm L^2(\Omega)$ functions with vanishing divergence, i.e., 
\[H(\text{div}^0;\Omega) :=\{\bm u\in {\bm L}^2(\Omega):\; \nabla\cdot \bm u=0\},\] and  $\bm n$ is the unit outward normal vector to $\partial \Omega$.
Taking divergence on both sides of the first equation of \eqref{prob1}, we see that the divergence-free condition $\nabla\cdot\bm u=0$  holds automatically.
%, since we have the lower-order term $\bm u$ in the equation \eqref{prob1}.

We define $H_0(\text{curl}^2;\Omega)$ with vanishing boundary conditions:
\begin{align*}
&H_0(\text{curl}^2;\Omega):=\{\bm u \in H(\text{curl}^2;\Omega):\;{\bm n}\times\bm u=0\; \text{and}\; \nabla\times \bm u=0\;\; \text{on}\ \partial \Omega\}.
\end{align*}
%The space of $\bm L^2(D)$ functions with a divergence zero is denoted by $H(\div^0;D)$ and defined by
%\[H(\text{div}^0;D) :=\{\bm u\in {\bm L}^2(D):\; \nabla\cdot \bm u=0\}.\]
%For the sake of satisfying divergence-free condition, we adopt mixed methods where the constraint $\nabla\cdot\bm u=0$ in \eqref{prob1} is satisfied in a weak sense by introducing an auxiliary unknown $p$ and employing a mixed
The variational formulation reads:
find $\bm u\in H_0(\curl^2;\Omega)$,  such that
\begin{equation}\label{prob22}
\begin{split}
a(\bm u,\bm v)&=(\bm f, \bm v)\quad \forall \bm v\in H_0(\curl^2;\Omega),
\end{split}
\end{equation}
with $a(\bm u,\bm v):=(\nabla\times\nabla\times\bm u,\nabla\times\nabla\times\bm v) + (\bm u,\bm v)$.

 We define the finite element space with vanishing boundary conditions
\begin{eqnarray*}%classical Raviart-Thomas-N\'ed\'elec's
  V^0_h=\{\bm{v}_h\in V^{r-1,k}_h,\ \bm{n} \times \bm{v}_h=0\ \text{and}\ \nabla\times  \bm{v}_h = 0 \ \text {on} \ \partial\Omega\}.
\end{eqnarray*}

\begin{remark}
To enforce the vanishing boundary conditions, we only need to set all the DOFs on $\partial\Omega$ to be 0.
\end{remark}

The $H(\text{curl}^2)$-conforming finite element method reads: seek $\bm u_h\in V^0_h$,  such that
\begin{equation}\label{prob3}
\begin{split}
 a(\bm u_h,\bm v_h)&=(\bm f, \bm v_h)\quad \forall \bm v_h\in V^0_h.
\end{split}
\end{equation}

We now turn to a concrete example.
We consider the problem \eqref{prob1} on a unit square $\Omega=(0,1)\times(0,1)$ with an exact solution
	\begin{equation}
	\bm u=\left(
	\begin{array}{c}
	3\pi\sin^3(\pi x)\sin^2(\pi y)\cos(\pi y) \\
	-3\pi \sin^3(\pi y)\sin^2(\pi x)\cos(\pi x)\\
	\end{array}
	\right).
	\end{equation}
   Then the source term $\bm f$ can be obtained by a simple calculation.

   The finite element solution is denoted as $\bm u_h$. To measure the error between the exact solution and the finite element solution, we denote 
	\[\bm e_h=\bm u-\bm u_h.\]

\subsection{The new family of elements with $r=k-1$}
We  first use the lowest-order element in the new family with $r=k-1$ to solve the problem \eqref{prob1}. In this test, we use  uniform triangular meshes and uniform rectangular meshes with the mesh size $h$ varying  from ${1}/{20}$ to ${1}/{320}$ with the bisection strategy. For $\bm u=(u_1,u_2)^T$, we define two discrete norms:
\begin{align}
\3bar\bm u\3bar_{V}^2=& 	\sum_{K\in \mathcal{T}_h}2h_x^K\int_{y_c^K-h_y^K}^{y_c^K+h_y^K}u_1^2(x_c^K,y)\d y+\sum_{K\in \mathcal{T}_h}2h_y^K\int_{x_c^K-h_x^K}^{x_c^K+h_x^K}u_2^2(x,y_c^K)\d x,\label{u_discrete_norm}\\
&\3bar \bm u\3bar^2_W=\sum_{K\in \mathcal{T}_h}4h_x^Kh_y^K\left[u_1^2(x_c^K,y_c^K)+u_2^2(x_c^K,y_c^K)\right]\label{curlcurlu_discrete_norm}
\end{align}
where $K=(x_c^K-h_x^K,x_c^K+h_x^K)\times(y_c^K-h_y^K,y_c^K+h_y^K)$ and $x_c^K,y_c^K,h_x^K,h_y^K$ are defined in Fig. \ref{rect}.
\begin{figure}[!h]
\centering
\includegraphics[width=7cm]{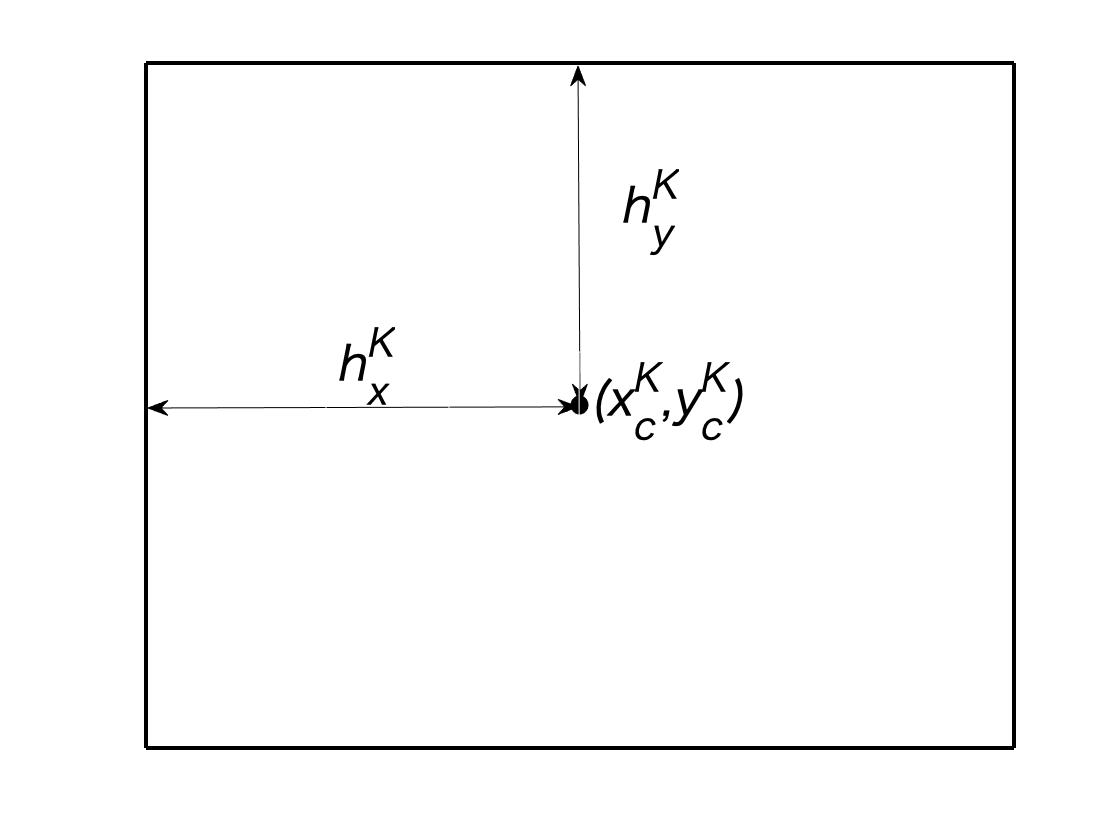}
\caption{A rectangular element}
\label{rect}
\end{figure}

Table \ref{tab1} illustrates various errors and convergence rates for triangular elements.  
Table \ref{tab2} shows errors measured in various norms for rectangular elements. We also depict error curves for rectangular elements with a log-log scale in Fig. \ref{fig1}.
We observe that the numerical solution converges to the exact solution with a convergence order 1 in the $L^2$-norm, 2 in the $H(\curl)$-norm, and 1 in the $H(\curl^2)$-norm, respectively. From Fig. \ref{fig1}, we also observe some superconvergence phenomena of $\bm e_h$ and $(\nabla\times)^2\bm e_h$ measured in the sense of  \eqref{u_discrete_norm} and \eqref{curlcurlu_discrete_norm}, respectively. Using these superconvergent results, together with some recovery techniques, we can construct a solution with higher accuracy if needed.

\begin{table}[h!]
	\centering
	\caption{Numerical results by  the lowest-order $(k=2)$ triangular element in the new family $(r=k-1)$ of $H(\text{curl}^2)$-conforming elements}\label{tab1}
	\begin{tabular}{cccccccc}
		\hline
		$h$ &$\left\|\bm e_h\right\|$&rates&$\left\|\nabla\times\bm e_h\right\|$&rates&$\left\|(\nabla\times)^2\bm e_h\right\|$& rates\\
		\hline
		$1\slash 20$&1.90386e-02  &    &4.92128e-02        &&2.49140e+00\\
		$1\slash40$ &9.46304e-03&1.0086 &1.25357e-02& 1.9730 &1.25626e+00&0.9878 \\
		$1\slash80$ &4.72423e-03 &1.0022&3.14876e-03&1.9932  &6.29464e-01& 0.9969 \\
		$1\slash160$&2.36120e-03 &1.0006&7.88122e-04 &1.9983 &3.14900e-01&  0.9992  \\
		$1\slash320$&1.18329e-03&0.9967 &1.97108e-04&1.9994  &1.57471e-01&0.9998\\
		\hline
	\end{tabular}
\end{table}

%\begin{table}[h]
%	\centering
%	\caption{Numerical results by the lowest-order $(k=2)$ rectangular element in the new family of $H(\text{curl}^2)$-conforming elements} \label{tab2}
%	\begin{tabular}{cccccccc}
%		\hline
%		$h$ &$\left\|\bm e_h\right\|$&rates&$\left\|\nabla\times\bm e_h\right\|$&rates&$\left\|(\nabla\times)^2\bm e_h\right\|$& rates\\
%		\hline
%		$1\slash 20$ &1.5695328472e-01  &1.4442&  2.6306530093e-01&1.9974 &1.4843084704e+01
%		&1.0019 \\
%		$1\slash40$&5.7679877708e-02&1.0214 &6.5884652515e-02&1.9993&7.4117731458e+00&1.0004 \\
%		$1\slash80$&2.8415561791e-02 &1.0033&1.6478831200e-02&1.9998&3.7048415427e+00&1.0001 \\
%		$1\slash160$&1.4175619521e-02& 1.0006&4.1201878641e-03 &1.9999&1.8522957136e+00&1.0000 \\
%		$1\slash320$&7.0850661102e-03& &1.0300896572e-03&&9.2613239963e-01&& \\
%		\hline
%	\end{tabular}
%\end{table}

 \begin{table}[h!]
	\centering
	\caption{Numerical results by the lowest-order $(k=2)$ rectangular element in the new family $(r=k-1)$ of $H(\text{curl}^2)$-conforming elements}\label{tab2}
	\begin{tabular}{cccccccc}
		\hline
		$h$ &$\left\|\bm e_h\right\|$&$\left\|\bm e_h\right\|_V$&$\left\|\nabla\times\bm e_h\right\|$&$\left\|(\nabla\times)^2\bm e_h\right\|$& $\left\|(\nabla\times)^2\bm e_h\right\|_W$\\
		\hline
  $1\slash 20$  & 1.1286e-01  & 1.4312e-02  & 1.3911e-01 &  1.2610e+01  &2.0177e+00\\
   $1\slash 40$ & 5.6602e-02  & 3.5786e-03  & 3.4624e-02 &  6.2788e+00  &5.0321e-01\\
   $1\slash 80$ & 2.8323e-02  & 8.9473e-04  & 8.6464e-03 &  3.1361e+00  &1.2573e-01\\
  $1\slash 160$ & 1.4164e-02  & 2.2375e-04  & 2.1610e-03 &  1.5676e+00  &3.1428e-02\\
  $1\slash 320$ & 7.0832e-03  & 1.1206e-04  & 5.4022e-04 &  7.8375e-01  &7.8567e-03\\
		\hline
	\end{tabular}
\end{table}

\begin{figure}[!h]
\centering
\includegraphics[width=12cm]{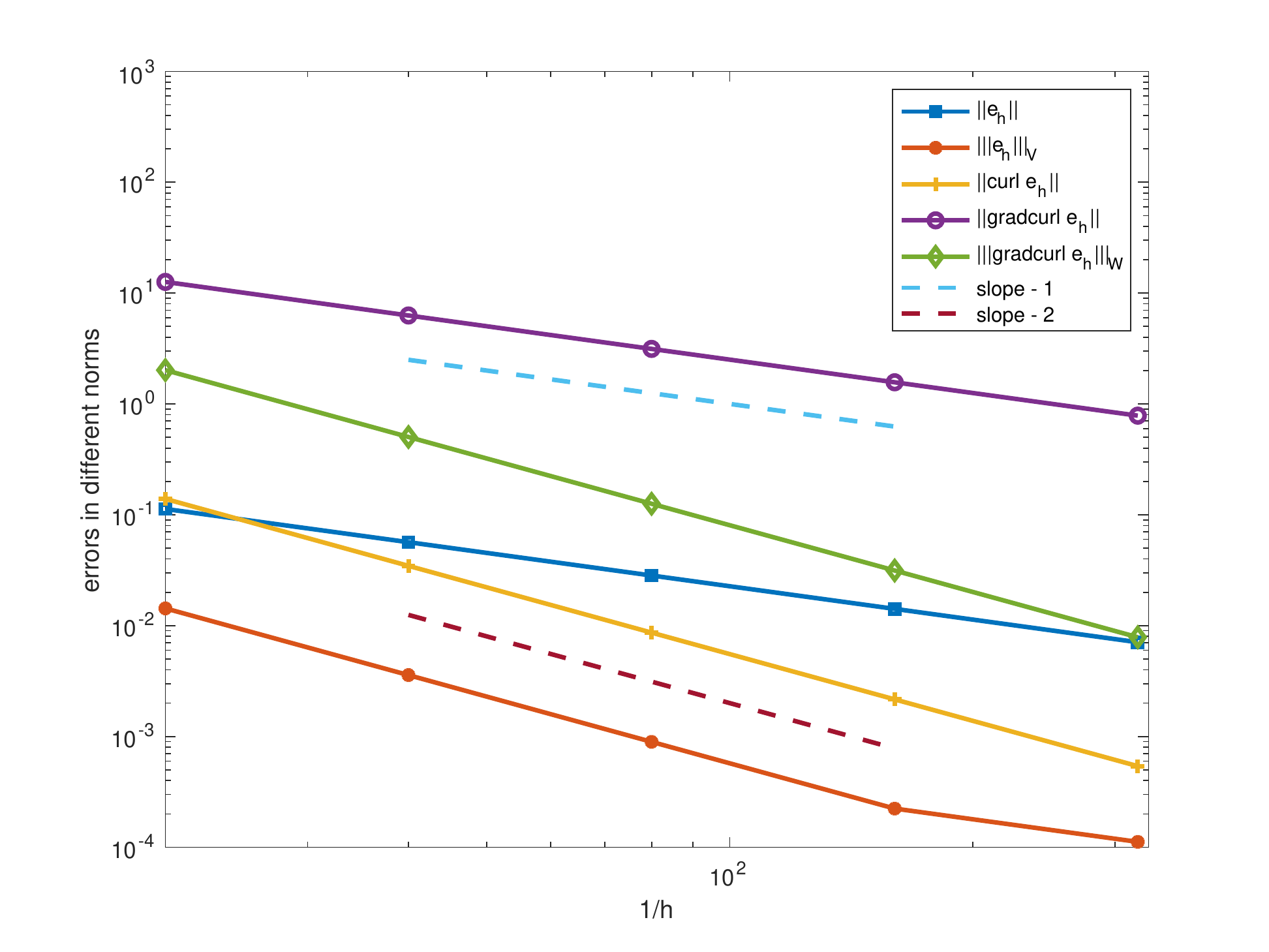}
\caption{Error curves in different norms}\label{fig1}
\end{figure}
 
\subsection{The family of elements with $r=k$}
We then use the lowest-order element $V^{k-1, k}_{h}$ in the  family with $r=k$. 

Again, we use the uniform mesh. Table \ref{tab3} and Table \ref{tab4} demonstrate the numerical results with  $h$ varying from ${1}/{10}$ to ${1}/{160}$. We observe a second-order convergence in the $L^2$-norm and $H(\curl)$-norm, and a first-order convergence in the $H(\curl^2)$-norm.
\begin{table}[h!]
	\centering
	\caption{Numerical results by the lowest-order $(k=2)$  triangular element in the family  of $H(\text{curl}^2)$-conforming elements with $ r=k$} \label{tab3}
	\begin{tabular}{cccccccc}
		\hline
		$h$ &$\left\|\bm e_h\right\|$&rates&$\left\|\nabla\times\bm e_h\right\|$&rates&$\left\|(\nabla\times)^2\bm e_h\right\|$& rates\\
		\hline
	  $1\slash 10$ &1.946294e-02&           &1.831378e-01 &         &4.821773e+00\\
    $1\slash 20$   &5.104203e-03  & 1.9310  &4.921121e-02&1.8959    &2.491403e+00&0.9526\\
     $1\slash 40$  &1.292287e-03 & 1.9818   &1.253529e-02 & 1.9730  &1.256258e+00&0.9878\\
     $1\slash 80$  &3.241096e-04 &  1.9954  &3.148659e-03  &1.9932  &6.294644e-01&0.9969\\
     $1\slash 160$ &8.131642e-05&   1.9949  &7.880957e-04  & 1.9983 &3.148996e-01&0.9992	\\	\hline
	\end{tabular}
\end{table}

\begin{table}[h!]
	\centering
	\caption{Numerical results by the lowest-order $(k=2)$  rectangular element in the  family  of $H(\text{curl}^2)$-conforming elements with $ r=k$} \label{tab4}
	\begin{tabular}{cccccccc}
		\hline
		$h$ &$\left\|\bm e_h\right\|$&rates&$\left\|\nabla\times\bm e_h\right\|$&rates&$\left\|(\nabla\times)^2\bm e_h\right\|$& rates\\
		\hline
    $1\slash 10$&6.449132e-02&          &5.664956e-01&          &2.563424e+01&\\
    $1\slash 20$&1.592685e-02&2.0176    &1.391017e-01&2.0259    &1.261045e+01&1.0235\\
    $1\slash 40$&3.970283e-03&2.0041    &3.462207e-02&2.0064    &6.278774e+00&1.0061\\
    $1\slash 80$&9.918685e-04&2.0010    &8.645999e-03&2.0016    &3.136060e+00&1.0015\\
   $1\slash 160$&2.480152e-04&1.9997    &2.160906e-03&2.0004    &1.567613e+00&1.0004	\\	\hline
	\end{tabular}
\end{table}

\subsection{The family of elements with $r=k+1$}
We now test elements in the family with $r=k+1$. We apply  the same mesh as before. Table \ref{tab5}, Table \ref{tab6}, and Table \ref{tab7} show the  numerical results for various mesh sizes and elements.  We observe  the same convergence behavior as in Theorem \ref{interp-f2}. 
%To obtain a third order convergence in the $L^2$-norm, we need to derive two more convergence orders from the dual argument. However, the convergence order in $H(\curl^2)$-norm is only 1. That's the reason that the convergence order in $L^2$-norm is 2 instead of 3 as the interpolation error. The higher-order elements do not have this problem. To further verify this, we use the third-order rectangular element to solve the problem and the results are shown in Table \ref{tab7}. From this table, we see that the convergence orders of the numerical solution coincide with those of the interpolation.

\begin{table}[h]
	\centering
	\caption{Numerical results by the lowest-order $(k=2)$  triangular element in the  family of $H(\text{curl}^2)$-conforming elements with $r=k+1$} \label{tab5}
	\begin{tabular}{cccccccc}
		\hline
		$h$ &$\left\|\bm e_h\right\|$&rates&$\left\|\nabla\times\bm e_h\right\|$&rates&$\left\|(\nabla\times)^2\bm e_h\right\|$& rates\\
		\hline
		$1\slash 10$ & 1.916204e-01  && 1.831377e+00&&4.821773e+01\\
		$1\slash20$& 4.953536e-02&1.9517&4.921121e-01&1.8959&2.491403e+01 		&0.9526 \\
		$1\slash40$&1.254233e-02 &1.9817&1.253529e-01&1.9730&1.256258e+01& 0.9878 \\
		$1\slash80$&3.145763e-03& 1.9953& 3.148659e-02&1.9932&6.294644e+00 &0.9969 \\
		$1\slash160$&7.897003e-04& 1.9940&7.880958e-03&1.9983&3.148996e+00&0.9992\\
		\hline
	\end{tabular}
\end{table}

\begin{table}[!ht]
	\centering
	\caption{Numerical results by the lowest-order $(k=2)$  rectangular element in the family of $H(\text{curl}^2)$-conforming elements with $r=k+1$} \label{tab6}
	\begin{tabular}{cccccccc}
		\hline
		$h$ &$\left\|\bm e_h\right\|$&rates&$\left\|\nabla\times\bm e_h\right\|$&rates&$\left\|(\nabla\times)^2\bm e_h\right\|$& rates\\
		\hline
		$1\slash 10$ &8.399241e-02 &&  7.736407e-01& &3.117602e+01\\
		$1\slash20$& 2.055671e-02&2.0306 &1.924122e-01&2.0075&1.556987e+01&1.0017 \\
		$1\slash40$&5.125523e-03&2.0038&4.804486e-02&2.0017&7.783057e+00&1.0003 \\
		$1\slash80$&1.280556e-03& 2.0009&1.200764e-02&2.0004&3.891305e+00&1.0001\\
		$1\slash160$&3.203172e-04& 1.9992& 3.001689e-03&2.0001&1.945625e+00&1.0000 \\
		\hline
	\end{tabular}
\end{table}

    \begin{table}[!ht]
	\centering
	\caption{Numerical results by the third-order ($k=3$)  rectangular element in the  family of $H(\text{curl}^2)$-conforming elements with $r=k+1$} \label{tab7}
	\begin{tabular}{cccccccc}
		\hline
		$h$ &$\left\|\bm e_h\right\|$&rates&$\left\|\nabla\times\bm e_h\right\|$&rates&$\left\|(\nabla\times)^2\bm e_h\right\|$& rates\\
		\hline
		$1\slash 4$ &6.482470e-02 && 9.955505e-01&&2.796216e+01& \\
		$1\slash 8$&4.580398e-03&3.8230&1.388809e-01& 2.8416&7.337119e+00&1.9302   \\
		$1\slash 16$&2.927226e-04&3.9679&1.780427e-02&2.9636&1.854476e+00& 1.9842\\
		$1\slash 32$&1.838464e-05& 3.9930&2.239038e-03&2.9913&4.648552e-01&1.9962
\\
$1\slash 64$&1.166284e-06& 3.9785&2.802981e-04&2.9978&1.162907e-01&1.9990
\\

			\hline
	\end{tabular}
\end{table}

We conclude this section by pointing out that the three families of elements bear their own advantages. 
The new family ($r=k-1$) can be the best choice if we pursue a low computational cost, while the  family with $r=k+1$ stands out for its higher accuracy in the $L^2$-norm.

%If what we pursue is low computation cost, the new family is the best choice. If we want a higher accuracy in $L^2$-norm, then the second family is a good option. 

\section{Conclusion}
In this paper, we constructed finite element de Rham complexes with enhanced smoothness in 2D. The new construction yields several curl-curl conforming elements. The two existing families of elements fit into our complexes and with the idea in this paper, we extend these elements to lower-order cases. The simple elements (e.g., with 6 DOFs and 8 DOFs for the lowest-order cases on  triangles and rectangles, respectively) are thus easy to implement.

In the future, we will construct discrete Stokes type complexes and curl-curl conforming elements in 3D and further investigate the superconvergence phenomena.
  
\section*{Acknowledgments}
We would like to thank Dr. Baiju Zhang for finding a mistake in the previous version.

\section*{Appendix}
The basis functions for the lowest-order element $(r=1,k=2)$ on a reference rectangle $(-1,1)\times(-1,1)$ are listed as follows. 
\begin{align*}
u_1 =[&-((x_2^2 - 1)(- 3x_2x_1^2 + 2x_1 + 5x_2 - 4))/32,\\
             & ((x_2^2 - 1)(3x_2x_1^2 + 2x_1 - 5x_2 + 4))/32,\\
             & ((x_2^2 - 1)(3x_2x_1^2 + 2x_1 - 5x_2 - 4))/32,\\
           &-((x_2^2 - 1)(- 3x_2x_1^2 + 2x_1 + 5x_2 + 4))/32,\\
 &-((x_2 - 1)(3x_1^2x_2^2 + 3x_1^2x_2 - 5x_2^2 - 5x_2 + 8))/32,\\
 & ((x_2 + 1)(3x_1^2x_2^2 - 3x_1^2x_2 - 5x_2^2 + 5x_2 + 8))/32,\\
                         & (x_2(x_2^2 - 1)(3x_1^2 - 5))/32,\\
                         &-(x_2(x_2^2 - 1)(3x_1^2 - 5))/32];
 \end{align*}
 \begin{align*}
u_2 =[&((x_1^2 - 1)(- 3x_1x_2^2 + 2x_2 + 5x_1 - 4))/32,\\
           & ((x_1^2 - 1)(- 3x_1x_2^2 - 2x_2 + 5x_1 + 4))/32,\\
           &  -((x_1^2 - 1)(3x_1x_2^2 + 2x_2 - 5x_1 + 4))/32,\\
           & ((x_1^2 - 1)(- 3x_1x_2^2 + 2x_2 + 5x_1 + 4))/32,\\
                        &  (x_1(x_1^2 - 1)(3x_2^2 - 5))/32,\\
                       &  -(x_1(x_1^2 - 1)(3x_2^2 - 5))/32,\\
 &-((x_1 - 1)(3x_1^2x_2^2 - 5x_1^2 + 3x_1x_2^2 - 5x_1 + 8))/32,\\
 & ((x_1 + 1)(3x_1^2x_2^2 - 5x_1^2 - 3x_1x_2^2 + 5x_1 + 8))/32].
  \end{align*}
  
The basis functions for the lowest-order element $(r=1,k=2)$ on a reference triangle are listed as follows. 
\begin{align*}
	u_1 =[&x_2(x_1 + x_2)/2 - x_2/2 + x_1x_2/2 + x_2(3x_1 - 4x_1x_2)(x_1 + x_2 - 1),\\
                                 & -x_1x_2(4x_2 - 3)(x_1 + x_2 - 1),\\
                          & x_1x_2 + x_2(3x_1 - 4x_1x_2)(x_1 + x_2 - 1),\\
         &- x_2(x_1 + x_2) - 3x_1x_2 - 6x_2(3x_1 - 4x_1x_2)(x_1 + x_2 - 1),\\
         &- x_2(x_1 + x_2) - 3x_1x_2 - 6x_2(3x_1 - 4x_1x_2)(x_1 + x_2 - 1),\\
      & 1 - 3x_1x_2 - 6x_2(3x_1 - 4x_1x_2)(x_1 + x_2 - 1) - x_2(x_1 + x_2)];
 \end{align*}
 \begin{align*}
u_2 =[&x_1(8x_1^2x_2 + 2x_1^2 + 8x_1x_2^2 - 6x_1x_2 - 3x_1 + 1)/2,\\
                           &x_1^2(4x_2 + 1)(x_1 + x_2 - 1),\\
                     &x_1x_2 + x_1(x_1 + 4x_1x_2)(x_1 + x_2 - 1),\\
    & x_1(x_1 + x_2) - 3x_1x_2 - 6x_1(x_1 + 4x_1x_2)(x_1 + x_2 - 1),\\
 &x_1(x_1 + x_2) - 3x_1x_2 - 6x_1(x_1 + 4x_1x_2)(x_1 + x_2 - 1) - 1,\\
    & x_1(x_1 + x_2) - 3x_1x_2 - 6x_1(x_1 + 4x_1x_2)(x_1 + x_2 - 1)].
     \end{align*}

\bibliographystyle{plain}
\bibliography{quadcurl-2d-reduction}{}

\begin{thebibliography}{10}

\bibitem{arnold2018finite}
D.~Arnold.
\newblock {\em Finite element exterior calculus}, volume~93.
\newblock SIAM, 2018.

\bibitem{arnold2006finite}
D.~Arnold, R.~Falk, and R.~Winther.
\newblock Finite element exterior calculus, homological techniques, and
  applications.
\newblock {\em Acta Numerica}, 15:1--155, 2006.

\bibitem{arnold2010finite}
D.~Arnold, R.~Falk, and R.~Winther.
\newblock Finite element exterior calculus: from hodge theory to numerical
  stability.
\newblock {\em Bulletin of the American mathematical society}, 47(2):281--354,
  2010.

\bibitem{arnold2014periodic}
D.~Arnold and A.~Logg.
\newblock Periodic table of the finite elements.
\newblock {\em SIAM News}, 47(9):212, 2014.

\bibitem{arnold2011serendipity}
Douglas~N Arnold and Gerard Awanou.
\newblock The serendipity family of finite elements.
\newblock {\em Foundations of Computational Mathematics}, 11(3):337--344, 2011.

\bibitem{BrennerSC2019Multigrid100}
S.~C. Brenner, J.~Cui, and L.~Sung.
\newblock Multigrid methods based on {H}odge decomposition for a quad-curl
  problem.
\newblock {\em Computational Methods in Applied Mathematics}, 19(2):215--232,
  2019.

\bibitem{Brenner2017Hodge}
S.~C. Brenner, J.~Sun, and L.~Sung.
\newblock Hodge decomposition methods for a quad-curl problem on planar
  domains.
\newblock {\em Journal of Computational Science}, 73(2-3):495--513, 2017.

\bibitem{Cakoni2017A}
F.~Cakoni and H.~Haddar.
\newblock A variational approach for the solution of the electromagnetic
  interior transmission problem for anisotropic media.
\newblock {\em Inverse Problems and Imaging}, 1(3):443--456, 2017.

\bibitem{Chen2018Analysis164}
G.~Chen, W.~Qiu, and L.~Xu.
\newblock Analysis of a mixed finite element method for the quad-curl problem.
\newblock {\em arXiv:1811.06724}, 2018.

\bibitem{christiansen2018nodal}
S.~H Christiansen, J.~Hu, and K.~Hu.
\newblock Nodal finite element de rham complexes.
\newblock {\em Numerische Mathematik}, 139(2):411--446, 2018.

\bibitem{christiansen2016generalized}
S.~H Christiansen and K.~Hu.
\newblock {Generalized Finite Element Systems for smooth differential forms and
  Stokes problem}.
\newblock {\em Numerische Mathematik}, 2018.

\bibitem{falk2013stokes}
R.~Falk and M.~Neilan.
\newblock Stokes complexes and the construction of stable finite elements with
  pointwise mass conservation.
\newblock {\em SIAM Journal on Numerical Analysis}, 51(2):1308--1326, 2013.

\bibitem{hiptmair1999canonical}
R.~Hiptmair.
\newblock Canonical construction of finite elements.
\newblock {\em Mathematics of Computation of the American Mathematical
  Society}, 68(228):1325--1346, 1999.

\bibitem{Qingguo2012A}
Q.~Hong, J.~Hu, S.~Shu, and J.~Xu.
\newblock A discontinuous {G}alerkin method for the fourth-order curl problem.
\newblock {\em Journal of Computational Mathematics}, 30(6):565--578, 2012.

\bibitem{Monk2012Finite}
P.~Monk and J.~Sun.
\newblock Finite element methods for {M}axwell's transmission eigenvalues.
\newblock {\em SIAM Journal on Scientific Computing}, 34(3):B247--B264, 2012.

\bibitem{neilan2015discrete}
M.~Neilan.
\newblock Discrete and conforming smooth de rham complexes in three dimensions.
\newblock {\em Mathematics of Computation}, 84(295):2059--2081, 2015.

\bibitem{sun2011iterative}
J.~Sun.
\newblock Iterative methods for transmission eigenvalues.
\newblock {\em SIAM Journal on Numerical Analysis}, 49(5):1860--1874, 2011.

\bibitem{Sun2016A}
J.~Sun.
\newblock A mixed {FEM} for the quad-curl eigenvalue problem.
\newblock {\em Numerische Mathematik}, 132(1):185--200, 2016.

\bibitem{quadcurlWG}
J.~Sun, Q.~Zhang, and Z.~Zhang.
\newblock A curl-conforming weak {G}alerkin method for the quad-curl problem.
\newblock {\em BIT Numerical Mathematics}, 59(4), 2019.

\bibitem{SunZ2018Multigrid102}
Z.~Sun, J.~Cui, F.~Gao, and C.~Wang.
\newblock Multigrid methods for a quad-curl problem based on {$C^0$} interior
  penalty method.
\newblock {\em Computers \& Mathematics with Applications}, 76(9):2192--2211,
  2018.

\bibitem{WangC2019Anew101}
C.~Wang, Z.~Sun, and J.~Cui.
\newblock A new error analysis of a mixed finite element method for the
  quad-curl problem.
\newblock {\em Applied Mathematics and Computation}, 349:23--38, 2019.

\bibitem{quad-curl-eig-posterior}
L.~Wang, Q.~Zhang, J.~Sun, and Z.~Zhang.
\newblock A priori and a posterior error estimations of quad-curl eigenvalue
  problems in 2{D}.
\newblock {\em To appear}.

\bibitem{WZZelement}
Q.~Zhang, L.~Wang, and Z.~Zhang.
\newblock H($\text{curl}^2$)-conforming finite elements in 2 dimensions and
  applications to the quad-curl problem.
\newblock {\em SIAM Journal on Scientific Computing}, 41(3):A1527--A1547, 2019.

\bibitem{Zhang2018M2NA}
S.~Zhang.
\newblock Mixed schemes for quad-curl equations.
\newblock {\em Esaim Mathematical Modelling \& Numerical Analysis},
  52(1):147--161, 2018.

\bibitem{Zhang2018Regular162}
S.~Zhang.
\newblock Regular decomposition and a framework of order reduced methods for
  fourth order problems.
\newblock {\em Numerische Mathematik}, 138:241--271, 2018.

\bibitem{Zheng2011A}
B.~Zheng, Q.~Hu, and J.~Xu.
\newblock A nonconforming finite element method for fourth order curl equations
  in $\mathbb{R}^3$.
\newblock {\em Mathematics of Computation}, 80(276):1871--1886, 2011.

\end{thebibliography}

\end{document}